
\documentclass[12pt]{amsart}
\usepackage{mathrsfs}
\usepackage{amsmath}
\usepackage{latexsym}
\usepackage{amssymb}
\usepackage{amsfonts}
\usepackage{multirow}
\usepackage{xcolor}
\usepackage{verbatim}
\usepackage{caption,amsmath,amssymb,amsfonts,amsthm,latexsym,graphicx,multirow,color,hyperref,enumerate}
\usepackage{graphics}
\usepackage{booktabs}

\oddsidemargin=0.4in
\evensidemargin=0.4in
\topmargin=-0.2in
\textwidth=15cm
\textheight=23.5cm 

\parskip 2pt
\def\ov{\overline} 
\def\l{\langle} \def\r{\rangle} 
 
\def\FF{\mathbb F} \def\ZZ{\mathbb Z}

\def\mod{{\sf mod~}}

\def\Aut{{\sf Aut}} 

 \def\K{{\sf K}}

\def\D{{\rm D}} 

\def\Z{{\bf Z}} \def\O{{\rm O}}

\def\a{\alpha} \def\b{\beta} \def\d{\delta} \def\s{\sigma}
\def\t{\tau}  \def\o{\omega}

\def\A{{\rm A}}

\def\PSL{{\rm PSL}}\def\PGL{{\rm PGL}}

  \def\D{{\rm D}}

\def\C{{\mathrm{C} }}  
\def\D{\mathrm{D}}  \def\A{\mathrm{A}}       
 
        \def\PGL{\mathrm{PGL}} \def\PSL{\mathrm{PSL}}          
  
\def\Aut{\mathrm{Aut}}

\def\Z{{\rm Z}}

\def\le{\leqslant}
\def\leq{\leqslant}
\def\ge{\geqslant}

\def\calP{{\mathcal P}}

\def\lcm{{\rm lcm}}

\def\calM{{\mathcal M}}
\def\calF{{\mathcal F}}

\def\Rev{\mathsf{RevMap}}

\def\calA{\mathcal{A}}

\def\RevMap{{\sf RevMap}}

\newtheorem{theorem}{Theorem}[section]
\newtheorem{proposition}[theorem]{Proposition}
\newtheorem{lemma}[theorem]{Lemma}

\newtheorem{hypothesis}[theorem]{Hypothesis}
\newtheorem{construction}[theorem]{Construction}

\theoremstyle{definition}

\setcounter{section}{0}
\def\qed{{\hfill$\Box$\bigskip}
	\medbreak}

\begin{document}
	
\title[]{Arc-transitive maps with coprime\\ Euler characteristic and edge number -- II}
\thanks{This work was partially supported  by NSFC grant 11931005}

\author[Li]{Cai Heng Li}
\address{
Department of Mathematics\\
Southern University of Science and Technology\\
Shenzhen, Guangdong 518055\\
P. R. China}
\email{lich@sustech.edu.cn}

\author[Liu]{Luyi Liu}
\address{ShenZhen International Center for Mathematics\\
Southern University of Science and Technology\\
Shenzhen, Guangdong 518055\\
P. R. China}
\email{12031108@mail.sustech.edu.cn}

\maketitle
	
\begin{abstract}
This is the second of a series of papers which aim towards a classification of edge-transitive maps of which the Euler characteristic and the edge number are coprime.
This one carries out the classification work for arc-transitive maps with non-solvable automorphism groups, which together with the first one completes a description of arc-transitive maps with the Euler characteristic and the edge number coprime.
The classification is involved with a construction of some new and interesting reversing maps.\vskip0.1in
	
\noindent\textit{Key words:} arc-regular, maps, Euler characteristic
		
\end{abstract}

\section{Introduction}
A {\it map} is a $2$-cell embedding of a graph into a closed surface.
Throughout the paper, we denote by $\calM=(V, E, F)$ a map with vertex set $V$, edge set $E$, and face set $F$.
The {\it underlying graph} $(V,E)$ of $\calM$ is written as $\Gamma$, and the supporting surface of $\calM$ is denoted by $\mathcal{S}$.
The {\it Euler characteristic} of $\calM$ is defined to be that of its supporting surface, so
\[\chi(\calM)=\chi(\mathcal{S})=|V|-|E|+|F|.\]
Obviously, the edge number $|E|$ directly impacts on $\chi(\calM)$.
In this paper, we investigate the case where $\gcd(|E|,\chi(\calM))=1$, namely, $|E|$ and $\chi(\calM)$ are coprime.

An {\it arc} of $\calM$ is an incident pair $(\a,e)$ of vertex $\a$ and edge $e$, and a {\it flag} of $\calM$ is an incident triple $(\a,e,f)$ of vertex $\a$, edge $e$ and face $f$.
Each edge $e=[\a,e,\a']$ corresponds to two arcs $(\a,e)$, $(\a',e)$ and four flags $(\a,e,f)$, $(\a,e,f')$, $(\a',e,f)$ and $(\a',e,f')$ .
The arc set and flag set of $\calM$ are denoted by $\calA$ and $\calF$ respectively, so that $|\calF|=2|\calA|=4|E|$.
We denote by $E(\a)$ and $E(f)$ sets of edges that incident to the vertex $v$ and the face $f$, respectively.
Then, we define the {\it vertex valency} of $\a$ by $|E(\a)|$, and the {\it face length} of $f$ by $|E(f)|$.

An {\it automorphism} of $\calM$ is a permutation of flags that preserves incidence relations, and all automorphisms of $\calM$ form the automorphism group $\Aut(\calM)$.
An automorphism of $\calM$ fixing a flag must fix all the flags, and so is the identity.
Thus $\Aut(\calM)$ is semiregular on the flags of $\calM$.
Give a group $G\leq\Aut(\calM)$.
If $G$ is transitive on the edge set $E$, or the arc set $\calA$ of $\calM$, then $\calM$ is called a {\it $G$-edge-transitive map}, or a {\it $G$-arc-transitive map}, respectively.
Further, if $G$ is transitive on the flag set $\calF$, then $G=\Aut(\calM)$ is regular on $\calF$, and $\calM$ is called a {\it flag-regular map} or simply a {\it regular map}.
If $G$ is transitive on the arc set of $\calM$ and intransitive on the flag set, then $G$ is regular on the arc set, and $\calM$ is called a {\it $G$-arc-regular map}.
Similarly,  $\calM$ is {\it $G$-edge-regular} if $G$ is regular on the edge set of $\calM$.

A map is called {\it locally finite} if vertex valencies and face lengths are finite.
Locally finite edge-transitive maps and their automorphism groups are classified into fourteen types by Graver and Watkins \cite{etm14tp} according to local structures and actions of their automorphism groups.

Let us focus on arc-transitive maps now.
Let $\calM$ be a $G$-arc-transitive map, where $G\leq\Aut(\calM)$.
Fix two flags $(\a,e,f)$ and $(\b,e,f')$, which are incident with the edge $e=[\a,e,\b]$.
Then the vertex stabiliser $G_\a$ is cyclic or dihedral, and the edge stabiliser $G_e$ contains an involution $z$ which interchanges the paired arcs $(\a,e,\b)$ and $(\b,e,\a)$.
In the case where $G_\a=\l a\r$, the map $\calM$ is called {\it $G$-vertex-rotary}, and further, $\calM$ is called {\it $G$-rotary} or {\it $G$-bi-rotary} if $(f,f)^z=(f',f)$ or $(f,f')$, respectively.
We remark that rotary maps are also called {\it orientably regular} maps.
In this case, $G$ is regular on the arc set, and $(a,z)$ is called a {\it rotary pair}.
On the other hand, if $G_\a=\l x,y\r$ is dihedral, then $\calM$ is said to be {\it $G$-vertex-reversing}, and further, $\calM$ is called {\it $G$-reversing} or {\it $G$-bi-reversing} if $(f,f')^z=(f,f')$ or $(f',f)$, respectively.
In this case, if $G$ is regular on the arc set, then $(x,y,z)$ is called a {\it reversing triple}.
We summarize the five types of arc-transitive maps in the Table \ref{tab}.

\begin{table}[h]
\caption{Five types of arc-transitive maps}\label{tab}
\centering
\begin{tabular}{cccc}
	\toprule[0.01in]
	Type & $G_\a$ & $(f,f')^z$ &Name\\ 
	\midrule[0.01in]
	$1$ & $\l x,y\r=\D_{2k}$ & $(f,f')\text{ or }(f',f)$&regular\\ 
	$2^*$ & $\l x,y\r=\D_k$ & $(f,f')$&reversing\\
	$2^{{\rm P}}$ &$\l x,y\r=\D_k$ & $(f',f)$ &bi-reversing\\ 
    $2^*$ex & $\l a\r=\ZZ_k$ & $(f',f)$ & rotary (orientably regular)\\
    $2^{{\rm P}}\rm{ex}$ & $\l a\r=\ZZ_k $& $(f,f')$& bi-rotary\\
\bottomrule[0.01in]
\end{tabular}

\end{table}

Maps with high symmetry degree have been extensively studied, see \cite{Brahana,regular-3p,birotary-p,regular-p,etm14tp,auto-etm,RotaMap,RevMap,howsymmetry} and references therein.
In particular, some important families of arc-transitive maps with prescribed Euler characteristic have been classified, see \cite{regular-p,regular-3p} for regular maps with negative prime characteristic and Euler characteristic equal to $-3p$, respectively, and see \cite{birotary-p} for bi-rotary maps with Euler characteristic negative prime.
In this paper, we study arc-transitive maps of which the Euler characteristic and the edge number are coprime.
In the following theorem, we classify such maps with non-solvable automorphism groups.
	
\begin{theorem}\label{mainThm}
Let $\calM=(V,E,F)$ be a map, and let $G\leq\Aut(\calM)$ be arc-transitive on $\calM$.
If $\gcd(\chi(\calM),|E|)=1$ and $G$ is non-solvable,
then $G=\Aut(\calM)$, $\calM$ is non-orientable, and further either
\begin{enumerate}[{\rm(1)}]
\item $G\cong\A_5$, and $\calM$ is a regular map on a projective plane, with the underlying graph being the Petersen graph or $\K_6$; or
		
\item $\calM=\Rev(G,x,y,z)$ is a reversing map with a reversing triple $(x,y,z)$ such that one of the following holds, where $p\geqslant 5$ is a prime.
\begin{enumerate}[{\rm(i)}]
			
\item $G\cong\PSL(2,p)$, $\{\l x,y\r,\l x,z\r,\l y,z\r\}=\{\D_{2p},\D_{p+1},\D_{p-1}\}$, and $(x,y,z)$ is defined in {\rm Lemma~\ref{lem:PSL-triple}}, where $p\equiv 1\pmod 4$;
			
\item $G\cong\PGL(2,p)$, $\{\l x,y\r,\l x,z\r,\l y,z\r\}=\{\D_{2p},\D_{2(p+1)},\D_{2(p-1)}\}$, and $(x,y,z)$ is defined in {\rm Construction~\ref{cons-1}};
			
\item $G\cong(\ZZ_m\times \PSL(2,p)).2$, $\{\l x,y\r,\l x,z\r,\l y,z\r\} =\{\D_{2mp},\D_{2(p+1)},\D_{2(p-1)}\}$, and $(x,y,z)$ is defined in {\rm Construction~\ref{PGL-cons-2}}, where $p\equiv 3\pmod 4$, $m>1$ and $\gcd(m,|\PSL(2,p)|)=1$.
\end{enumerate}
		
\end{enumerate}
\end{theorem}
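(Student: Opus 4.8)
The plan is to combine three ingredients: the arithmetic constraint $\gcd(\chi(\calM),|E|)=1$, the classification of locally finite arc-transitive maps into the five types of Table~\ref{tab}, and the structure theory of non-solvable groups generated by an involution $z$ together with a cyclic or dihedral vertex stabiliser. Write $G=\Aut(\calM)$ for the moment (we will justify $G=\Aut(\calM)$ at the end), and recall that for an arc-transitive map we have $|\calF|=4|E|=|G|$ when $G$ is arc-regular, and $|G|=2|E|$ in the rotary/bi-rotary cases where $G$ is arc-regular but flag-intransitive. The key first step is to translate the coprimality hypothesis into a divisibility statement about $|G|$. Using the Euler formula $\chi=|V|-|E|+|F|$ together with $|V|=|E|\cdot\frac{2}{k}$ and $|F|=|E|\cdot\frac{2}{\ell}$ (where $k$ is the vertex valency and $\ell$ the face length, both constant by arc-transitivity), one gets $\chi(\calM)=|E|\bigl(\tfrac2k+\tfrac2\ell-1\bigr)$, so $\chi$ and $|E|$ being coprime forces $|E|$ to divide a very small integer coming from the ``defect'' $\tfrac2k+\tfrac2\ell-1$; more usefully, any prime $r$ dividing $|E|$ must \emph{not} divide $\chi$, and since $|E|$ carries essentially all the prime power content of $|G|$ (up to the factor $4$), the hypothesis severely restricts the prime factorisation and forces $G$ to be, in a precise sense, ``almost cyclic-by-small'' on its Sylow subgroups. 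This is where the non-solvable composition factor must be pinned down.

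Second, I would invoke the standard reduction for arc-transitive maps: $G=\langle G_\alpha, z\rangle$ where $G_\alpha$ is cyclic $\ZZ_k$ or dihedral $\D_{2k}$, $z$ is an involution in $G_e\setminus G_\alpha$, and the face stabiliser $G_f=\langle g\rangle$ or dihedral is likewise small. Thus $G$ is generated by (at most) three involutions, or by an element and an involution — in group-theoretic terms $G$ is a quotient of a triangle group or of $\ZZ_k\rtimes\ZZ_2$. Combined with the arithmetic restriction from Step~1, the non-solvable socle of $G$ must be a simple group $T$ generated in this economical way and with tightly controlled Sylow structure. Here I would appeal to the classification of finite simple groups, or more practically to the known lists of simple groups that are $(2,m,n)$-generated or that arise as automorphism groups of regular/rotary maps with few exceptional prime divisors; the candidates that survive are $\A_5$ and $\PSL(2,p)$ (and its almost simple extensions $\PGL(2,p)$ and the relevant $\ZZ_m\times\PSL(2,p).2$), because these are precisely the simple groups whose order, after removing the tame factor coming from $|E|$, can be coprime to the Euler-characteristic defect. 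The $\A_5$ case then has to be run down by hand: its maps are small, the only arc-transitive ones with the coprimality property are on the projective plane, giving the Petersen graph and $\K_6$ as in conclusion~(1).

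Third, having isolated $G$, I would determine which \emph{type} of arc-transitive map can occur. The orientable (rotary or bi-rotary) types are eliminated by showing $\chi$ and $|E|$ share a common factor there — equivalently, $G$-rotary maps with these groups always have even $|V|+|F|-|E|$ pattern forcing $\gcd>1$; this is the computation that yields ``$\calM$ is non-orientable''. That leaves the regular and reversing types, and for the non-solvable candidates the vertex stabiliser cannot be dihedral of order $2k$ with $k$ large \emph{and} give a regular map unless $G=\A_5$; otherwise we land in the reversing case, where $G$ is generated by a reversing triple $(x,y,z)$ of involutions with $\langle x,y\rangle, \langle x,z\rangle, \langle y,z\rangle$ dihedral. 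For $G=\PSL(2,p)$ or $\PGL(2,p)$, the maximal dihedral subgroups are exactly $\D_{p\pm1}$ (and $\D_{2p}$ from the normaliser of a Sylow $p$-subgroup in $\PGL$), so the three dihedral subgroups generated by pairs from the triple must be $\{\D_{2p},\D_{p+1},\D_{p-1}\}$ or the $\PGL$-analogue, and the congruence $p\equiv1$ or $3\pmod4$ enters through whether the relevant dihedral subgroup of order $p-1$ or $p+1$ actually contains a reflection of the required kind; the explicit triples are then exhibited via Lemma~\ref{lem:PSL-triple} and Constructions~\ref{cons-1} and~\ref{PGL-cons-2}. The extension case $(\ZZ_m\times\PSL(2,p)).2$ arises because when $p\equiv3\pmod4$ the group $\PSL(2,p)$ alone does not carry a reversing triple of the needed shape, and one must take a central product with a coprime cyclic group to absorb the obstruction — this is the construction that provides the ``new and interesting reversing maps'' advertised in the abstract.

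The main obstacle, I expect, is Step~2: extracting from the coprimality hypothesis a restriction strong enough to force the simple composition factor into the short list $\{\A_5, \PSL(2,p)\}$ without an exhausting case analysis over all finite simple groups. The crux is a clean lemma of the form ``if $G$ is non-solvable, arc-transitive on a map, and $\gcd(\chi,|E|)=1$, then every non-abelian composition factor of $G$ has order divisible by at most [a bounded number] of primes, and its Sylow structure is [cyclic or metacyclic] at all but one prime'' — proving this sharply is the heart of the argument, and everything after it is bookkeeping with subgroup structure of $\PSL(2,p)$ and explicit verification that the listed triples do generate maps with the claimed invariants. A secondary technical point is verifying $G=\Aut(\calM)$ rather than a proper subgroup: one argues that any overgroup $G<\tilde G\le\Aut(\calM)$ would still be arc-transitive with the same underlying data, hence also falls into the list, and a direct comparison of orders (using that $|\Aut(\calM)|$ divides $|\calF|=4|E|$) rules out a strict containment except in the already-tabulated cases.
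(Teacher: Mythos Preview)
Your overall architecture is right, but Step~2 has a genuine gap, and the route you sketch will not close it. Saying that the non-abelian composition factor has ``at most a bounded number of prime divisors'' or is $(2,m,n)$-generated is far too weak: almost every finite simple group is $(2,3,7)$-generated, and plenty have only three or four prime divisors. The decisive observation---which you almost reach with ``almost cyclic-by-small on its Sylow subgroups'' but then abandon for a vaguer formulation---is that the coprimality hypothesis forces \emph{every Sylow subgroup of $G$} to lie inside some stabiliser $G_\omega$ with $\omega\in V\cup E\cup F$, and hence to be cyclic or dihedral; this is Lemma~\ref{(chi,|A|)=1}, proved in the companion paper. Once the Sylow $2$-subgroup is known to be dihedral, the Gorenstein--Walter theorem (not CFSG, and not generation lists) pins the solvable residual $G^{(\infty)}$ down to $\A_7$ or $\PSL(2,q)$; cyclic odd-order Sylow subgroups then rule out $\A_7$ (its Sylow $3$-subgroup is $\ZZ_3^2$) and force $q=p$ to be prime. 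That is the engine of the proof, and your proposed lemma about bounded prime divisors is neither necessary nor sufficient to reach it.

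There is a second problem in Step~3. You eliminate rotary and bi-rotary maps by calling them ``the orientable types'', but bi-rotary maps are \emph{not} orientable, so that argument does not touch them. The paper instead eliminates \emph{all} vertex-rotary maps (rotary and bi-rotary together) only \emph{after} already knowing that $G$ is one of $\PSL(2,p)$, $\PGL(2,p)$, or $(\ZZ_m\times\PSL(2,p)).2$, by checking directly that no cyclic subgroup $G_\alpha$ and dihedral subgroup $G_f$ of such a $G$ can satisfy $|G|=\lcm\{|G_\alpha|,|G_f|\}$ (Lemma~\ref{lem:not-rotary}). Relatedly, your Euler-formula computation $\chi=|E|\bigl(\tfrac{2}{k}+\tfrac{2}{\ell}-1\bigr)$ presumes a single face length $\ell$, which fails precisely in the reversing case where $G$ has two face orbits; the paper sidesteps this by working throughout with the stabiliser form of Lemma~\ref{(chi,|A|)=1} rather than the scalar Euler formula. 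Finally, non-orientability drops out at the end for an almost trivial reason once the reversing case is isolated: $G$ is arc-regular so $|G|=2|E|$, and since $4$ divides $|G|$ the edge number $|E|$ is even, whence $\chi$ is odd.
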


After collecting some preliminary results and basic definitions in Section~\ref{sec:prel}, we make a reduction for the proof of the main theorem to the reversing case in Section~\ref{sec:reduction}, and finally the proof will be completed in Section~\ref{sec:pf}.

\section{Preliminaries}\label{sec:prel}

In this section, we collect some preliminary results which we shall need for proving the main theorem.

Let $\calM=(V,E,F)$ be a map.
Let $e=[\a,e,\b]$ be an edge, and let $f,f'$ be the two faces which are incident with $e$.
Assume that $G\leq\Aut(\calM)$ is arc-transitive.
Then
\begin{itemize}
\item $G_e=\l z\r\cong\ZZ_2$, where the involution $z\in G_e$ interchanges the paired arcs $(\a,e,\b)$ and $(\b,e,\a)$;
\vskip0.1in
\item  $G_\a=\l a\r$, or $G_\a=\l x,y\r$ with $|x|=|y|=2$.
\end{itemize}

A new characterization is given in \cite{RevMap,RotaMap} for the five types of arc-transitive maps, with generic constructions in terms of rotary pairs $(a,z)$ or reversing triples $(x,y,z)$ presented for each of the five types.
Among them, reversing maps are particularly concerned in this paper, so we explain a bit more about it.

By definition, a reversing map is such that each of the involutions $x,y,z$ reverses the local orientation of the supporting surface.
Thus the involution $z$ is such that
\[(\a,e,\b)^z=(\b,e,\a),\ \mbox{and}\ (f,f')^z=(f,f').\]
So $G$ has two orbits on the face set $F$, and
\[G_f=\l x,z\r,\ \mbox{and}\ G_{f'}=\l y,z\r.\]
Such a map is denoted by
\[\calM=\RevMap(G,x,y,z).\]

Conversely, given a group with three generating involutions, there is a generic construction for reversing maps stated in the following lemma, referring to \cite{RevMap}.

\begin{lemma}\label{cons-maps}
Let $G$ be a finite group, and let $x,y,z$ be three involutions of $G$ such that $G=\l x,y,z\r$.
Let
\[\begin{array}{l}
V=[G:\l x,y\r],\\
E=[G:\l z\r],\\
F_1=[G:\l x,z\r],\\
F_2=[G:\l y,z\r].
\end{array}\]
Define an incidence geometry $\calM=(V,E,F)$, where $F=F_1\cup F_2$, with the incidence relation defined by non-empty intersection.
Then $\calM$ is a $G$-reversing map.
\end{lemma}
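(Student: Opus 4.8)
The plan is to verify directly that the incidence geometry $\calM=(V,E,F)$ defined from the cosets satisfies the axioms of a map (a $2$-cell embedding of a graph into a closed surface) and that $G$ acts on it as a reversing group of automorphisms. First I would record the standard translation-action set-up: $G$ acts on each of the four coset spaces $[G:\l x,y\r]$, $[G:\l z\r]$, $[G:\l x,z\r]$, $[G:\l y,z\r]$ by right multiplication, and because $G=\l x,y,z\r$, each of these actions is transitive and faithful. The incidence relation—non-empty intersection of cosets, viewed as subsets of $G$—is visibly $G$-invariant, so $G\leq\Aut(\calM)$. The key numerical point is that a vertex coset $g\l x,y\r$ meets an edge coset $h\l z\r$ precisely when $h\in g\l x,y\r\l z\r$, and one computes $|\l x,y\r\l z\r|/|\l z\r|$ to get the vertex valency; similarly one reads off the face lengths from $|\l x,z\r\l y\r|$-type products. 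Since $|x|=|y|=2$ these counts are finite, so $\calM$ is locally finite.

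Next I would exhibit the flag structure and connectedness. A \emph{flag} is an incident triple $(v,e,f)$; chasing the coset conditions, the flags are in bijection with $[G:\l x,y\r\cap\l z\r\cap(\l x,z\r\text{ or }\l y,z\r)]$, and using $\l x,y\r\cap\l x,z\r\cap\l z\r=1$ (any common element would be fixed by the faithful flag action, hence trivial) one gets that $G$ is regular, equivalently simply transitive, on the flag set of each of the two ``colours''. This regular action is exactly what forces $\calM$ to be a genuine map rather than a looser incidence geometry: the flag graph is connected because $G=\l x,y,z\r$, and the local rotation around each vertex is a single cycle because $\l x,y\r$ is dihedral acting on the arcs at that vertex, so each face boundary closes up into a topological disc. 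Here one invokes the characterisation from \cite{RevMap} that connectedness plus the dihedral/cyclic local condition is equivalent to being a $2$-cell embedding; I would simply cite that rather than reconstruct the surface by hand.

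Finally I would check that the resulting map is genuinely of \emph{reversing} type, i.e. that $G$ is arc-transitive with $G_\a=\l x,y\r$ dihedral, $G_e=\l z\r$, $G_f=\l x,z\r$, $G_{f'}=\l y,z\r$, and that $z$ fixes the unordered pair $\{f,f'\}$ pointwise while swapping the two arcs of $e$. Each of these is a direct stabiliser computation in the coset action: the stabiliser in $G$ of the coset $1\cdot\l x,y\r$ is $\l x,y\r$, and so on, and the fact that $z\in\l x,z\r\cap\l y,z\r$ means $z$ fixes both faces incident to the base edge, which is precisely the defining property $(f,f')^z=(f,f')$ in the table. Comparing with the definition of $\RevMap(G,x,y,z)$ recalled just before the lemma, we conclude $\calM$ is that map, so it is a $G$-reversing map.

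The main obstacle is the topological step—verifying that the combinatorial data really assembles into a $2$-cell embedding into a closed surface (faces are discs, the surface is without boundary). The cleanest route is to not prove this from scratch but to lean on the equivalence established in \cite{RevMap} between such coset geometries (with $G=\l x,y,z\r$, the three elements involutions, and the appropriate intersection conditions) and reversing maps; all the remaining content is then elementary coset arithmetic. If one wanted a self-contained argument, the work would be in showing that tracing a face boundary via the $\l x,z\r$- (resp. $\l y,z\r$-) orbits returns to the start after finitely many steps and sweeps out a disc, which again reduces to $|xz|$ (resp. $|yz|$) being finite and to the local action being as described.
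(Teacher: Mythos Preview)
The paper does not give its own proof of this lemma: it is stated with the preamble ``referring to \cite{RevMap}'' and is taken as input from that source. So there is no argument here to compare against, and your decision to invoke \cite{RevMap} for the topological step (that the coset data assemble into a genuine $2$-cell embedding of a closed surface) is exactly what the paper does wholesale for the entire statement.

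Your coset-geometry sketch is broadly the standard argument, but one step is circular as written. You assert $\l x,y\r\cap\l x,z\r\cap\l z\r=1$ on the grounds that ``any common element would be fixed by the faithful flag action, hence trivial''; but semiregularity of $G$ on flags is precisely what you are trying to establish at that point, so you cannot appeal to it. Note also that $\l z\r\le\l x,z\r$, so the triple intersection is just $\l x,y\r\cap\l z\r$, and this is trivial if and only if $z\notin\l x,y\r$, equivalently $\l x,y\r\ne G$ (more than one vertex). That is the condition you actually need to check or assume; in the full treatment of \cite{RevMap} it is handled explicitly. With that correction your outline is fine and matches what the cited reference does.
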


The map defined in the lemma is exactly $\RevMap(G,x,y,z)$.

Of course, there are two more reversing maps which can be defined by the triple $\{x,y,z\}$, where $(\a,e,f)$ and $(\a,e,f')$ are two incident flags:
\[\begin{array}{l}
\RevMap(G,y,z,x):\ G_\a=\l y,z\r,\ G_e=\l x\r,\ G_f=\l y,x\r, \mbox{and}\ G_{f'}=\l z,x\r,\\ 
\RevMap(G,z,x,y):\ G_\a=\l z,x\r,\ G_e=\l y\r,\ G_f=\l z,y\r, \mbox{and}\ G_{f'}=\l x,y\r.
\end{array}\]

In a previous paper \cite{LiuLY-1}, some basic and important properties are obtained for maps whose Euler characteristic and edge number are coprime.
We quote one of them here for convenience to cite later.
Recall that, for a map $\calM=(V,E,F)$, the Euler characteristic of $\calM$ is $\chi=\chi(\calM)=|V|-|E|+|F|$.

\begin{lemma}[\cite{LiuLY-1} Lemma~3.2]\label{(chi,|A|)=1}
Let $\calM=(V,E,F)$ be a map, and let $G\leq\Aut(\calM)$ be arc-transitive on $\calM$.
Assume that $\gcd(\chi(\calM),|E|)=1$.
Then the following statements are true:
\begin{enumerate}[{\rm(1)}]
		\item each Sylow subgroup of $G$ is a subgroup of a stabiliser $G_\o$, where $\o\in V\cup E\cup F$;
		\item each Sylow subgroup of $G$ is a cyclic or dihedral group;
		\item $|G|=\lcm\{|G_\o|: \o\in V\cup E\cup F\}$.
\end{enumerate}
\end{lemma}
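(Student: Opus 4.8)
The plan is to run the entire argument through a single mod-$p$ identity for the Euler characteristic. First I would record the numerical input coming from arc-transitivity: since $G_e=\l z\r\cong\ZZ_2$, edge-transitivity gives $|E|=|G:G_e|=|G|/2$, so $|G|=2|E|$. Consequently an odd prime $p$ divides $|G|$ if and only if it divides $|E|$, and then the hypothesis $\gcd(\chi(\calM),|E|)=1$ forces $p\nmid\chi(\calM)$. The second ingredient is the elementary counting congruence for a $p$-group $P$ acting on a finite set $\Omega$, namely $|\Omega|\equiv\fix_\Omega(P)\pmod p$, where $\fix_\Omega(P)$ denotes the number of $P$-fixed points (every non-trivial $P$-orbit has size a positive power of $p$). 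Applying this to the three $G$-sets $V$, $E$, $F$ and taking the alternating sum yields
\[
\chi(\calM)\equiv\fix_V(P)-\fix_E(P)+\fix_F(P)\pmod p,
\]
which is the engine of the whole argument.

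For statement (1), let $P$ be a Sylow $p$-subgroup of $G$. If $p$ is odd, then $p\mid|E|$ and hence $p\nmid\chi(\calM)$, so the displayed congruence has nonzero right-hand side; thus at least one of $\fix_V(P),\fix_E(P),\fix_F(P)$ is positive, i.e.\ $P$ fixes some $\o\in V\cup E\cup F$, giving $P\le G_\o$. For $p=2$ I would split on the parity of $|E|$: if $|E|$ is even then coprimality makes $\chi(\calM)$ odd, so the same congruence applies verbatim; if $|E|$ is odd then $|G|=2|E|$ has $2$-part exactly $2$, so the order-$2$ group $G_e=\l z\r$ is itself a Sylow $2$-subgroup, and by Sylow conjugacy every Sylow $2$-subgroup equals $G_{e'}=\l z'\r$ for some edge $e'$ --- an edge stabiliser. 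This proves (1) in all cases.

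Statements (2) and (3) then follow quickly. For (2) I would use that every stabiliser in question is cyclic or dihedral: the edge stabiliser is $\ZZ_2$, the vertex stabiliser is $\l a\r$ or $\l x,y\r$ by arc-transitivity, and the face stabiliser is cyclic or dihedral as well (the face case is the vertex case applied to the dual map, which carries the same arc-transitive $G$-action). Since subgroups of cyclic or dihedral groups are again cyclic or dihedral, each Sylow subgroup $P\le G_\o$ inherits this structure. For (3), each $|G_\o|$ divides $|G|$, so $\lcm\{|G_\o|\}$ divides $|G|$; conversely, part (1) places each Sylow $p$-subgroup $P$ inside some $G_\o$, so the full $p$-part $|P|=|G|_p$ divides that $|G_\o|$, and hence divides the lcm. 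Letting $p$ range over all primes dividing $|G|$ gives $|G|\di\lcm\{|G_\o|\}$, whence equality.

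The hard part is really only the prime $p=2$, where the alternating-sum congruence becomes vacuous precisely when $\chi(\calM)$ is even; the fix is the observation above that an odd edge number forces the Sylow $2$-subgroup to coincide (up to conjugacy) with the order-$2$ edge stabiliser, so it is handled outside the congruence. The remaining point requiring care is the claim that face stabilisers are cyclic or dihedral, which I would justify via duality rather than a case check on the five types in Table~\ref{tab}.
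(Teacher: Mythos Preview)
The paper does not prove this lemma itself; it is quoted verbatim from \cite{LiuLY-1}, so there is no in-paper argument to compare against. Your fixed-point congruence approach is the natural one and is essentially sound, but two points need tightening.

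First, the assertion $G_e=\l z\r\cong\ZZ_2$ holds only in the arc-regular case; when $G$ is flag-regular (Type~1 in Table~\ref{tab}) one has $|G_e|=4$. This does not damage the odd-prime argument, since $|G|\in\{2|E|,4|E|\}$ still gives $p\mid|G|\Leftrightarrow p\mid|E|$ for odd $p$. For $p=2$ with $|E|$ odd, the fix is equally easy: then $|G|_2=|G_e|$ in either case, so $G_e$ is itself a Sylow $2$-subgroup. You should state this rather than assume $|G_e|=2$ throughout.

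Second, the duality justification for ``face stabilisers are cyclic or dihedral'' does not go through as written. In a reversing (or bi-reversing) map $G$ has two orbits on $F$, so on the dual map $G$ is not vertex-transitive, hence not arc-transitive, and you cannot invoke the vertex-stabiliser description from Section~\ref{sec:prel}. The conclusion is nonetheless correct: $G_f$ acts faithfully on the cycle of edges bounding $f$ (an element fixing $f$ and every boundary edge fixes a flag and is trivial), so $G_f$ embeds in the dihedral group $\D_{2|E(f)|}$ and is therefore cyclic or dihedral. Replace the duality appeal with this direct argument and your proof is complete.
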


\section{A reduction to reversing maps}\label{sec:reduction}

We consider the maps satisfying the following hypothesis:
\begin{hypothesis}\label{hypo-1}
	{\rm
		Let $\calM=(V,E,F)$ be a map, with the following assumptions:
		\begin{enumerate}[{\rm (1)}]
		\item $\gcd(\chi(\calM),|E|)=1$;\vskip0.1in
		\item $\calM$ is $G$-arc-transitive with $G\leqslant \Aut(\calM)$ is non-solvable.
		\end{enumerate}
		}
\end{hypothesis}
In this section, we shall prove that $G=\Aut(\calM)$, and $\calM$ is a reversing map, with one exception that $G\cong\A_5$.

To determine the maps satisfying Hypothesis~\ref{hypo-1}, we need to frequently cite some information regarding subgroups of $\PSL(2,p)$ and $\PGL(2,p)$, which are known and listed below, see \cite{low-dim}.

\begin{lemma}\label{PSL-subgps}
	Let $H=\PSL(2,p)$ or $\PGL(2,p)$, where $p\ge5$ is a prime.
	Write $H=\PSL(2,p){:}\ZZ_d$, where $d=1$ or $2$ according to $H=\PSL(2,p)$ or $\PGL(2,p)$, respectively.
	Let $C$ be a cyclic subgroup, $D$ be a dihedral subgroup, and $P$ be a Sylow subgroup of $H$.
	Then either $\ZZ_p\cong P\cong C\lhd D\cong\D_{2p}$, or
	\begin{enumerate}[{\rm(1)}]
		\item $C\le\ZZ_{d(p+1)\over 2}$ or $\ZZ_{d(p-1)\over2}$, and\vskip0.07in
		\item $D\le \D_{d(p+1)}$ or $\D_{d(p-1)}$, and\vskip0.1in
		\item $P\le\D_{d(p+1)}$ or $\D_{d(p-1)}$.
	\end{enumerate}
	Moreover, $\D_{d(p+1)}$ and $\D_{d(p-1)}$ are maximal subgroups of $G$.
\end{lemma}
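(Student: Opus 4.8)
The plan is to read everything off the classical description (going back to Dickson, and recorded in \cite{low-dim}) of the subgroups of $\PSL(2,q)$ for $q=p$ prime, together with its refinement for $\PGL(2,p)$. Writing $H=\PSL(2,p){:}\ZZ_d$ as in the statement, recall that $|H|=\frac12 dp(p^2-1)$ and that, up to conjugacy, every subgroup of $H$ either lies in a Borel subgroup $\ZZ_p{:}\ZZ_{d(p-1)/2}$, or lies in one of the torus normalisers $\D_{d(p-1)}$ and $\D_{d(p+1)}$, or is an exceptional subgroup $\A_4$, $\S_4$, $\A_5$ (or $\PSL(2,p)$ itself when $d=2$). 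First I would observe that the Borel $\ZZ_p{:}\ZZ_{d(p-1)/2}$ is a Frobenius group with kernel $\ZZ_p$: hence its only cyclic subgroup of order divisible by $p$ is a Sylow $p$-subgroup $\ZZ_p$, and its only possible dihedral subgroup of order greater than $2$ is $\ZZ_p{:}\ZZ_2\cong\D_{2p}$, with $\ZZ_p$ normal in it, which occurs in $H$ precisely when $d=2$, or $d=1$ and $p\equiv1\pmod4$.

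Granting this, the three bounds follow by inspection of the three families. If $C$ is cyclic with $p\mid|C|$, then $C\cong\ZZ_p$ is a Sylow $p$-subgroup, and any dihedral subgroup of $H$ containing $C$ is a $\D_{2p}$ with $C$ normal in it; this is the first alternative of the lemma. Otherwise $\gcd(|C|,p)=1$, and from the families $C$ lies in the cyclic part $\ZZ_{d(p-1)/2}$ of a Borel, or in a torus $\ZZ_{d(p\pm1)/2}$, or has order at most $5$ inside an exceptional subgroup; in the last case the presence of that subgroup forces the relevant prime to divide $p^2-1$ (so $5\mid p^2-1$ whenever $\A_5\le H$ with $p\neq5$, and similarly for $3$), and since that prime is odd this is exactly what is needed to embed $C$ into $\ZZ_{d(p+1)/2}$ or $\ZZ_{d(p-1)/2}$, giving (1). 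The dihedral case runs parallel: either $p\mid|D|$, forcing $D\cong\D_{2p}$ with $\ZZ_p\lhd D$, or $|D|$ is coprime to $p$, in which case $D$ cannot lie in any Borel (the complement there is cyclic) and so lies in $\D_{d(p-1)}$, $\D_{d(p+1)}$, or an exceptional subgroup; in the latter case $|D|\le8$ and, since for $p\ge5$ exactly one of $p-1,p+1$ is divisible by $4$ (and $3$ or $5$ divides one of them when an exceptional subgroup is present), $D$ again embeds into $\D_{d(p-1)}$ or $\D_{d(p+1)}$, giving (2). Finally a Sylow subgroup $P$ is either a Sylow $p$-subgroup, hence isomorphic to $\ZZ_p$ as in the first alternative, or a Sylow $\ell$-subgroup with $\ell\mid p^2-1$; for odd $\ell$ it is cyclic and sits in the cyclic part of whichever of $\D_{d(p-1)},\D_{d(p+1)}$ corresponds to the factor $\ell$ divides, while for $\ell=2$ it is dihedral --- the Sylow $2$-subgroups of $\PSL(2,p)$ and $\PGL(2,p)$ are dihedral --- and sits in $\D_{d(p-1)}$ or $\D_{d(p+1)}$ according as the larger power of $2$ divides $p-1$ or $p+1$, giving (3).

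For the maximality assertion I would quote the classification of maximal subgroups of $\PSL(2,p)$ and $\PGL(2,p)$ in \cite{low-dim}, in which $\D_{d(p-1)}$ and $\D_{d(p+1)}$ appear as maximal subgroups (for the small primes $p\in\{5,7,11\}$ one of them is properly contained in an exceptional subgroup and should be enlarged accordingly, which does not affect the containments in (1)--(3)). I expect the one point genuinely requiring care to be the bookkeeping for the exceptional subgroups: one must check that whenever $\A_4$, $\S_4$ or $\A_5$ embeds in $H$, the arithmetic condition making that embedding possible --- for instance $p\equiv\pm1\pmod8$ when $\S_4\le\PSL(2,p)$ --- simultaneously forces each of its cyclic and dihedral subgroups into the claimed tori and torus normalisers. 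Everything else is an immediate consequence of the subgroup classification.
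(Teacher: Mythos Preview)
The paper does not prove this lemma: it is stated as a collection of known facts about subgroups of $\PSL(2,p)$ and $\PGL(2,p)$, supported only by the citation to \cite{low-dim}. There is therefore no argument in the paper to compare yours against. Your sketch is a correct derivation from Dickson's classical subgroup classification, and the organisation you propose---Borel, the two torus normalisers, then the exceptionals $\A_4,\S_4,\A_5$---is the natural one. Your key observation for part~(2), that the Borel is Frobenius with cyclic complement and hence contains no dihedral subgroup other than $\D_{2p}$, is exactly right; and your closing remark correctly isolates the one genuine verification, namely that when $\S_4\le\PSL(2,p)$ the congruence $p\equiv\pm1\pmod8$ is precisely what embeds $\ZZ_4$ and $\D_8$ into the appropriate torus and torus normaliser. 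Your caveat about small primes is also well taken: the ``Moreover'' clause as written is not literally correct for $p\in\{5,7,11\}$ (for instance $\D_{p-1}\le\A_5$ in $\PSL(2,11)$), so this is a defect of the lemma's statement rather than of your argument.
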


We first exclude vertex-rotary maps.

\begin{lemma}\label{lem:not-rotary}
	Let $\calM$ be a map and $G\leqslant \Aut(\calM)$ with Hypothesis~\ref{hypo-1}.
	Then the following statements hold.
	\begin{enumerate}[{\rm (1)}]
	\item Either $G\cong\ZZ_m{:}\ZZ_n\times \PSL(2,p)$, or $G\cong (\ZZ_m{:}\ZZ_n\times \PSL(2,p)).2$, where $p\geqslant 5$ is a prime, and $m$, $n$ and $|\PSL(2,p)|$ are pairwise coprime.\vskip0.1in
	\item The map $\calM$ is not a $G$-vertex-rotary map.
	\end{enumerate}
\end{lemma}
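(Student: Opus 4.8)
The plan is to use Lemma~\ref{(chi,|A|)=1} as the main structural engine. Since $\gcd(\chi(\calM),|E|)=1$, every Sylow subgroup of $G$ is cyclic or dihedral, and moreover every Sylow subgroup lies inside some stabiliser $G_\omega$ with $\omega\in V\cup E\cup F$. A finite group all of whose Sylow subgroups are cyclic or dihedral has a very restricted structure: for odd primes the Sylow subgroups are cyclic, so the subgroup $G'$ of odd order generated by the odd-order Sylow subgroups has all Sylow subgroups cyclic, hence (by the standard theory of $Z$-groups / metacyclic groups) is metacyclic, of the form $\ZZ_k{:}\ZZ_\ell$. Combining this with the Gorenstein--Walter theorem (a non-solvable group with dihedral Sylow $2$-subgroup has a composition factor $\PSL(2,q)$ or $\A_7$, and in our coprime situation $q=p$ must be prime and $\A_7$ is excluded since $|\A_7|$ is not a product of pairwise coprime cyclic/dihedral Sylow orders compatible with Lemma~\ref{(chi,|A|)=1}), one forces the non-solvable composition factor to be $\PSL(2,p)$ for a prime $p\ge 5$. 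A short analysis of how the solvable radical and the outer part can attach — using again that orders of the relevant Sylow subgroups must be pairwise coprime so that $|G|=\lcm\{|G_\omega|\}$ holds — yields exactly $G\cong\ZZ_m{:}\ZZ_n\times\PSL(2,p)$ or an index-$2$ extension thereof, with $m,n,|\PSL(2,p)|$ pairwise coprime. This gives part (1).

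For part (2), suppose for contradiction that $\calM$ is $G$-vertex-rotary. Then, by the discussion preceding the lemma, $G$ is regular on the arc set and $G_\alpha=\l a\r$ is cyclic, while $G_e=\l z\r\cong\ZZ_2$ and the face stabilisers are cyclic as well (in the rotary case) or dihedral (bi-rotary), but in the rotary case $G_f=\l az\r$ type cyclic and in the bi-rotary case $G_f$ is dihedral. The key point is to look at where the non-solvable part $\PSL(2,p)$ can sit. By Lemma~\ref{(chi,|A|)=1}(1), each Sylow subgroup of $G$ — in particular a Sylow $p$-subgroup $\ZZ_p$ and a Sylow $2$-subgroup (which here must be cyclic because in a vertex-rotary map the vertex and face stabilisers are cyclic and the edge stabiliser has order $2$, so the full $2$-part of $G$ is cyclic) — lies inside a vertex, edge or face stabiliser. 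But $\PSL(2,p)$ does not have a cyclic Sylow $2$-subgroup: its Sylow $2$-subgroups are dihedral (of order at least $4$ for $p\ge5$). This forces the Sylow $2$-subgroup of $G$ to be non-cyclic, contradicting the fact that in a vertex-rotary map all of $G_\alpha$, $G_e$, $G_f$ are cyclic or $\ZZ_2$ and hence (again via $|G|=\lcm\{|G_\omega|\}$, Lemma~\ref{(chi,|A|)=1}(3), together with part (1)'s pairwise coprimality) the Sylow $2$-subgroup of $G$ must be cyclic. This contradiction shows $\calM$ cannot be $G$-vertex-rotary.

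I expect the main obstacle to be part (1): pinning down precisely the group structure $\ZZ_m{:}\ZZ_n\times\PSL(2,p)$ (and its index-$2$ overgroup) rather than just the composition factors. This requires carefully combining the Gorenstein--Walter classification with the constraint that the solvable radical intersects $\PSL(2,p)$ trivially (forced by pairwise coprimality of the Sylow orders coming from Lemma~\ref{(chi,|A|)=1}(2)--(3)) and controlling the action of the outer cyclic/$2$-part — in particular ruling out a nontrivial action of $\PSL(2,p)$ on the metacyclic part and showing the top quotient has order at most $2$. Once the group is understood, part (2) is essentially the Sylow $2$-subgroup parity observation above and should be short; the only care needed there is to treat the rotary and bi-rotary subcases of "vertex-rotary" uniformly, which is handled by noting that in both the $2$-part of $G$ is controlled by cyclic stabilisers together with the single involution $z$.
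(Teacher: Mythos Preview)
Your approach to part~(1) matches the paper's: both use Lemma~\ref{(chi,|A|)=1} to see that all Sylow subgroups of $G$ are cyclic or dihedral, invoke Gorenstein--Walter to identify the unique non-abelian composition factor as $\PSL(2,p)$ (with $\A_7$ and $\PSL(2,p^f)$ for $f>1$ ruled out because their odd Sylow subgroups are not cyclic), and then pin down the extension structure. Your description of this last step is vaguer than the paper's (which takes $N=G^{(\infty)}$, sets $C=\C_G(N)$, uses the $N/C$-lemma together with $\Out(\PSL(2,p))\cong\ZZ_2$, and observes that $C$ must have odd order and hence be metacyclic), but the strategy is the same.

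There is, however, a genuine gap in your plan for part~(2). Your key step is the claim that in a vertex-rotary map ``the vertex and face stabilisers are cyclic,'' forcing the Sylow $2$-subgroup of $G$ to be cyclic and hence contradicting the dihedral Sylow $2$-subgroup of $\PSL(2,p)$. This is correct in the rotary case, but \emph{false} in the bi-rotary case: there $z$ fixes both faces incident with $e$, so $z\in G_f$, and $G_f$ is genuinely dihedral (generated by two involutions such as $z$ and $z^a$). You even acknowledge this earlier in your proposal, then contradict it. In the bi-rotary case a dihedral Sylow $2$-subgroup of $G$ can sit inside $G_f$, and your contradiction evaporates.

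The paper's argument for the bi-rotary case is different and needs an extra arithmetic step. Once a dihedral Sylow $2$-subgroup is located in $G_f$, one gets $|G_e|\mid |G_f|$ and hence $|G|=\lcm\{|G_\alpha|,|G_f|\}$ with $G_\alpha$ cyclic and $G_f$ dihedral. One then checks against the subgroup list in Lemma~\ref{PSL-subgps} that no such pair exists in $\PSL(2,p)$ or $\PGL(2,p)$: cyclic subgroups have order dividing $p$ or $d(p\pm1)/2$, dihedral subgroups have order dividing $2p$ or $d(p\pm1)$, and the lcm of any such pair cannot reach $|G|$. Your plan needs this additional argument (or an equivalent) to dispose of the bi-rotary case.
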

\begin{proof}
With the Hypothesis~\ref{hypo-1}, we have that each Sylow subgroup of the group $G\leqslant \Aut(\calM)$ is cyclic or dihedral, according to Lemma~\ref{(chi,|A|)=1}.

{\rm (1).}	Let $N=G^{(\infty)}$ be the solvable residual of $G$, the smallest normal subgroup $N$ of $G$ such that $G/N$ is solvable.
	Since a Sylow 2-subgroup of $G$ is dihedral, $N$ is a simple group, and it follows from Gorenstein's result \cite{cfds2} that $N\cong\A_7$ or $\PSL(2,q)$, where $q=p^f$ with $p\geqslant 5$ being an odd prime.
	As Sylow subgroups of odd orders are cyclic, we obtain that $q=p$ is a prime, and $N\cong\PSL(2,p)$.
	
	Let $C=\C_G(N)$.
	Then $C\lhd G$, and so $CN\lhd G$.
	Here by the {\rm NC}-lemma, we have that $G/CN\leqslant \ZZ_2$, as $\Aut(N)\cong\PGL(2,p)$.
	Note that $C\cap N=1$ since $N$ is simple.
	There is $CN=C\times N$.
	Since each Sylow subgroup of $G$ is cyclic or dihedral, we have that $C$ has odd order, and so
	\[G\cong (\ZZ_n{:}\ZZ_m)\times\PSL(2,p)\text{ or }\left((\ZZ_n{:}\ZZ_m)\times\PSL(2,p)\right).2,\]
	where $m$, $n$ and $|\PSL(2,p)|$ are pairwise coprime. 
	
{\rm (2).} Let $(\a,e,f)$ be a flag of $\calM$.
	Suppose that $\calM$ is $G$-vertex-rotary.
	Then the vertex stabiliser $G_\a$ is a cyclic group.
	It follows that $G$ is transitive on both faces and vertices, and by Lemma~\ref{(chi,|A|)=1}, we have that $|G|=\lcm\{|G_\a|,|G_f|,|G_e|\}$.
	Note that Sylow $2$-subgroups of $G$ are dihedral.
	Again by Lemma~\ref{(chi,|A|)=1},  stabilisers of faces in $G$ are dihedral, so that there exists a Sylow $2$-subgroup of $G$ which is contained in $G_f$.
	As $G_e$ is a $2$-subgroup, $|G_e|$ divides $|G_f|$.
	So \[|G|=\lcm\{|G_\a|,|G_f|,|G_e|\}=\lcm\{|G_\a|,|G_f|\}.\]
	By the list of subgroups of $\PSL(2,p)$ and $\PGL(2,p)$, there do not exist candidates of cyclic subgroup $G_\a$ and dihedral subgroup $G_f$ satisfying $|G|=\lcm\{|G_\a|,|G_f|\}$.
	Thus the map $\calM$ is not a $G$-vertex-rotary map.
\end{proof}

We now determines the candidates for the automorphism groups $G$.
\begin{lemma}\label{AutM-structure}
    Let $\calM$ be a map and $G\leqslant \Aut(\calM)$ with Hypothesis~\ref{hypo-1}.
	Then either $G\cong\PSL(2,p)$, or $G\cong (\ZZ_m\times\PSL(2,p)){:}\ZZ_2$ which  is homomorphic to $\D_{2m}$ and $\PGL(2,p)$, where $p\geqslant 5$ is a prime, and $\gcd(m,|\PSL(2,p)|)=1$.
\end{lemma}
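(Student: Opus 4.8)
The plan is to refine the structure obtained in Lemma~\ref{lem:not-rotary}. By part~(2) of that lemma $\calM$ is not $G$-vertex-rotary, so $G_\a$ is dihedral; hence $\calM$ is $G$-vertex-reversing, and for a flag $(\a,e,f)$ we may take $G_\a=\l x,y\r$ and $G_e=\l z\r$ with $x,y,z$ involutions, where $G=\l x,y,z\r$ by arc-transitivity and connectedness. By part~(1), $N:=G^{(\infty)}\cong\PSL(2,p)$ with $C:=\C_G(N)\lhd G$ of odd order coprime to $|N|$, $CN=C\times N$, and $[G:CN]\le 2$; moreover $C\cong\ZZ_m{:}\ZZ_n$, so $C$ has a characteristic cyclic subgroup $R$ (its normal Hall subgroup) with $C/R$ cyclic and $\gcd(|R|,|C/R|)=1$.

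The heart of the argument is to show that $C$ is cyclic. For a prime $r\mid|R|$ let $C_r\le R$ be the Sylow $r$-subgroup of $R$; then $C_r$ is characteristic in $R$, hence characteristic in $C$, hence normal in $G$. Conjugation yields a homomorphism $\rho\colon G\to\Aut(C_r)$ into a cyclic group, and since $G=\l x,y,z\r$ with each generator of order dividing $2$, the image $\rho(G)$ is generated by elements of order at most $2$ in a cyclic group, so $|\rho(G)|\le 2$. Thus $\rho(c)$ has order dividing $2$ for every $c\in C$; as it also divides the odd integer $|c|$, it is trivial, so $C$ centralises $C_r$. As this holds for every $r\mid|R|$, $C$ centralises $R$, i.e.\ $R\le\Z(C)$; then $C/\Z(C)$ is a quotient of $C/R$ and hence cyclic, forcing $C$ abelian, and an abelian group with cyclic Sylow subgroups is cyclic. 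Hence $C\cong\ZZ_m$ with $\gcd(m,|N|)=1$.

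It remains to analyse the extension. If $G=CN$, then $G=\ZZ_m\times\PSL(2,p)$ and $G/N\cong\ZZ_m$ has odd order; but $G/N$ is generated by the images of the involutions $x,y,z$, which vanish in an odd-order group, so $G/N=1$, $m=1$, and $G\cong\PSL(2,p)$. If $[G:CN]=2$, fix $t\in G\setminus CN$; since $\C_G(N)=C$ and $t\notin CN$, $t$ induces an outer automorphism of $N$, so by the {\rm NC}-lemma $G/C$ embeds into $\Aut(N)=\PGL(2,p)$ with image of index $2$ over $\Inn(N)$, giving $G/C\cong\PGL(2,p)$. Now put $\ov{G}:=G/N$, of order $2m$ with cyclic normal subgroup $\ov{C}:=CN/N\cong\ZZ_m$ of index $2$; by Schur--Zassenhaus $\ov{G}=\ov{C}\rtimes\l\ov{t}\r$, and $\ov{G}$ is generated by the images of $x,y,z$, which are involutions lying in the non-identity coset of $\ov{C}$. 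Writing $\ov{C}=\ov{C}_+\times\ov{C}_-$ for the order-$\le 2$ action of $\ov{t}$ (valid since $2$ is invertible modulo $m$), the involutions of $\ov{G}$ are exactly $\{\ov{c}\,\ov{t}:\ov{c}\in\ov{C}_-\}$ and these generate $\ov{C}_-\rtimes\l\ov{t}\r$; equality with $\ov{G}$ forces $\ov{C}_-=\ov{C}$, so $\ov{t}$ inverts $\ov{C}$, and correspondingly $t$ inverts $C$. Therefore $G\cong(\ZZ_m\times\PSL(2,p)){:}\ZZ_2$ with the $\ZZ_2$ inverting $\ZZ_m$ and outer on $\PSL(2,p)$, so it is homomorphic to both $\D_{2m}$ and $\PGL(2,p)$, as claimed.

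The main obstacle is the cyclicity of $C$. The mechanism is that an involution-generated group can act on a normal cyclic subgroup only through a quotient of exponent at most $2$, and a group of odd order cannot realise any nontrivial action of this kind on its own normal Hall subgroup; this pins that subgroup to the centre, and $C$ is then abelian. After that the proof is a bookkeeping exercise using only $\Aut(\PSL(2,p))=\PGL(2,p)$, the {\rm NC}-lemma, and Schur--Zassenhaus, with no further appeal to the map or to the coprimality hypothesis beyond Lemma~\ref{lem:not-rotary}.
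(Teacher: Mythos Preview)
Your proof is correct and follows essentially the same route as the paper's: reduce via Lemma~\ref{lem:not-rotary} to $G$ being generated by three involutions, then analyse the quotients $G/N$ and $G/C$ (the paper uses $G/\O_{2'}(G)$ and the dihedral Sylow $2$-subgroup to identify the latter with $\PGL(2,p)$, whereas you use the NC-lemma directly). Your argument for the cyclicity of $C$ is more explicit than the paper's, which simply asserts $n=1$ and $\Z(G/N)=1$ from the fact that $G/N$ is involution-generated.
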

	
\begin{proof}
By Lemma~\ref{lem:not-rotary}, $\calM$ is a $G$-vertex reversing map, and so the group $G$ is generated by three involutions. 
Then each factor group of $G$ is generated by involutions too.
Thus, if $G\cong (\ZZ_m{:} \ZZ_n)\times \PSL(2,p)$, then $mn=1$ and $G=\PSL(2,p)$.

Assume now that $G\cong((\ZZ_m{:}\ZZ_n)\times \PSL(2,p)).\ZZ_2$, where $m,n,|\PSL(2,p)|$ are pairwise coprime.
Let $N=G^{(\infty)}$. 
Then  $N\cong \PSL(2,p)$ and $G/N\cong (\ZZ_m{:}\ZZ_n).2$.
Since the factor group $G/N$ is also generated by involutions we have that $n=1$ and $\Z(G/N)=1$.
Thus $G/N\cong \D_{2m}$.
On the other hand, $\O_{2'}(G)\cong\ZZ_m$ and $G/\O_{2'}\cong \PSL(2,p).2$ has dihedral Sylow $2$-subgroups.
It follows that $ G/\O_{2'}\cong \PSL(2,p){:}\ZZ_2\cong\PGL(2,p)$.
We hence conclude that $G\cong(\ZZ_m\times\PSL(2,p)){:}\ZZ_2$ is homomorphic to $\D_{2m}$ and $\PGL(2,p)$.
\end{proof}

Following from Lemma~\ref{AutM-structure}, the arc-transitive map $\calM$ is either flag-regular or vertex-reversing.
To end this section, we classify face-transitive maps $\calM$ with Hypothesis~\ref{hypo-1}, and complete the reduction to reversing maps.

\begin{proposition}\label{prop:face-trans}
Let $\calM$ be a map and $G\leqslant \Aut(\calM)$ with Hypothesis~\ref{hypo-1}.
Then $G=\Aut(\calM)$, and either
\begin{enumerate}[{\rm(1)}]
\item $G\cong\A_5$, and $\{G_\a, G_f\}=\{\D_{10},\D_6\}$, $G_e\cong\D_4$, and $\calM$ is a flag-regular map on the projective plane with underlying graph being the Petersen graph or $\K_6$.
\item $G$ is intransitive on $F$, and $\calM$ is a reversing map.
\end{enumerate}
\end{proposition}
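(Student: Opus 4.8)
The plan is to feed the structural classification already obtained in Lemmas~\ref{lem:not-rotary} and~\ref{AutM-structure} into the arithmetic identity of Lemma~\ref{(chi,|A|)=1}(3). By Lemma~\ref{lem:not-rotary}, $\calM$ is $G$-vertex-reversing, so $G_\a=\l x,y\r$ is dihedral; hence, by the classification of the five types recorded in Table~\ref{tab}, either $\calM$ is flag-regular, in which case $G$ is transitive on $F$ and $G_e\cong\ZZ_2^2$, or $\calM$ is a $G$-reversing or $G$-bi-reversing map, in which case $G$ has exactly two orbits on $F$ and $G_e\cong\ZZ_2$. (In both situations $G_e$, as the edge stabiliser in a map, embeds into $\ZZ_2^2$, so $|G_e|\le 4$.) It therefore suffices to prove: (a) if $G$ is transitive on $F$ then $G\cong\A_5$ with the stated local structure and $\calM$ is the described projective-plane map; and (b) if $G$ is intransitive on $F$ then $G=\Aut(\calM)$.

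For (a), assume $G$ transitive on $F$, hence on each of $V,E,F$; then $G_\a,G_f$ are dihedral, $|G_e|=4$, and Lemma~\ref{(chi,|A|)=1}(3) gives $|G|=\lcm\{|G_\a|,|G_f|,4\}$. By Lemma~\ref{AutM-structure} there are two cases. If $G\cong\PSL(2,p)$, then since $p\nmid 4$ one of $G_\a,G_f$ has order divisible by $p$, hence equals $\D_{2p}$ by Lemma~\ref{PSL-subgps}, while the other has order dividing $p+1$ or $p-1$; then $|G|=\lcm\{2p,4,|G_f|\}$ divides $4p(p+1)$, which forces $p\le 9$, and inspecting $p=7$ (where the $2$-part $2^3$ of $|G|$ cannot be realised) and $p=5$ (where $\{|G_\a|,|G_f|\}=\{\D_{10},\D_6\}$, $G_e\cong\D_4$ works) leaves $p=5$. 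If $G\cong(\ZZ_m\times\PSL(2,p)){:}\ZZ_2$, I would use the embedding $G\hookrightarrow\D_{2m}\times\PGL(2,p)$ to see that every cyclic subgroup of $G$ has order dividing one of $mp,\ m(p-1),\ m(p+1)$, so every dihedral subgroup of $G$ has order dividing one of $2mp,\ 2m(p-1),\ 2m(p+1)$; comparing odd parts in $|G|=mp(p^2-1)=\lcm\{|G_\a|,|G_f|,4\}$ then shows the dihedral subgroup among $\{G_\a,G_f\}$ carrying the factor $p$ cannot also carry the odd part of $p^2-1$, which forces $p-1$ or $p+1$ to be a power of $2$; but then both $G_\a$ and $G_f$ have $2$-part at most $4$ (the $p$-carrier has odd cyclic part, while the other one's cyclic part $n$ divides $m(p\mp1)$, whose $2$-part is only $2$), so the $2$-part of $\lcm\{|G_\a|,|G_f|,4\}$ is at most $4$, whereas the $2$-part of $mp(p^2-1)$ is at least $8$ -- a contradiction. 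Hence $G\cong\PSL(2,5)\cong\A_5$, with $\{G_\a,G_f\}=\{\D_{10},\D_6\}$ and $G_e\cong\D_4$.

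It then remains, for (a), to identify $\calM$. From $|G|=60$ and $|G_e|=4$ we get $|E|=15$, and $\{|V|,|F|\}=\{60/6,60/10\}=\{10,6\}$, so $\chi(\calM)=10+6-15=1$ and the supporting surface is the projective plane. The underlying graph is a connected $\A_5$-arc-transitive graph with vertex stabiliser $\D_{10}$ (so valency $5$ on $6$ vertices) or $\D_6$ (so valency $3$ on $10$ vertices); the first is $\K_6$, from the $2$-transitive action of $\A_5\cong\PSL(2,5)$ on six points, and the second is the Petersen graph, the valency-$3$ orbital graph of $\A_5$ acting on the ten $2$-subsets of a $5$-set. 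Finally, since $\calM$ is flag-regular and $G$ already acts transitively -- hence, by flag-semiregularity, regularly -- on $\calF$, we get $|\Aut(\calM)|=|\calF|=|G|$, so $G=\Aut(\calM)$; this is conclusion~(1).

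For (b), assume $G$ intransitive on $F$; then $\calM$ is a $G$-reversing or $G$-bi-reversing map, and, being arc-transitive but not flag-transitive, $G$ is arc-regular, so $|G|=|\calA|=2|E|$. The group $A:=\Aut(\calM)$ again satisfies Hypothesis~\ref{hypo-1}, so part~(a) applies to $A$: were $A$ transitive on $F$, then $A\cong\A_5$ and $\calM$ would be flag-regular, forcing the non-solvable subgroup $G\le A\cong\A_5$ to be all of $\A_5$, contrary to $G$ being intransitive on $F$. Hence $A$ is intransitive on $F$ as well, so $|A|=2|E|=|G|$ and $A=G$; this is conclusion~(2). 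The step I expect to be the real obstacle is the exclusion of $G\cong(\ZZ_m\times\PSL(2,p)){:}\ZZ_2$ in case~(a): it requires an exact description of which orders the dihedral subgroups of this group can take, followed by a delicate balancing of three mutually incompatible divisibility demands -- the prime $p$, the odd part of $p^2-1$, and the large $2$-part of $|G|$ -- inside the single equation $|G|=\lcm\{|G_\a|,|G_f|,4\}$.
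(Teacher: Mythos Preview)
Your overall architecture is the paper's: split on whether $G$ is face-transitive, use $|G|=\lcm\{|G_\a|,|G_e|,|G_f|\}$ from Lemma~\ref{(chi,|A|)=1}(3) to force $G\cong\A_5$ in the transitive case, and in the intransitive case show $\Aut(\calM)$ cannot properly contain $G$ by feeding $\Aut(\calM)$ back through part~(a). Part~(b) is essentially identical to the paper's argument.

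There is, however, a genuine gap in your setup. You assert that a $G$-bi-reversing map has two $G$-orbits on $F$. This is false: in the bi-reversing type one has $(f,f')^z=(f',f)$, so $z$ interchanges the two faces at $e$, and since $G$ is edge-transitive this puts all faces into a single $G$-orbit. Hence your dichotomy ``face-transitive $\Leftrightarrow$ flag-regular $\Leftrightarrow |G_e|=4$'' is wrong, and as written your part~(a) omits the bi-reversing (face-transitive, $|G_e|=2$) possibility, while your part~(b) would still owe an explanation of why bi-reversing does not occur. The paper avoids this by allowing $|G_e|\in\{2,4\}$ throughout the face-transitive analysis and only concluding $G_e\cong\D_4$ (hence flag-regular) at the end. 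Your arithmetic survives this correction unchanged---replacing $4$ by $|G_e|\le 4$ in the $\lcm$ only tightens the constraints---and once it is made, face-intransitive immediately gives the reversing type in~(b), as required.

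Two minor points. Your parenthetical for $p=7$ is inaccurate: $\D_8$ is a Sylow $2$-subgroup of $\PSL(2,7)$, so the $2$-part $2^3$ \emph{can} sit inside a dihedral $G_f$; what actually fails is that no dihedral subgroup of $\PSL(2,7)$ carries both $2^3$ and the factor~$3$ simultaneously. And for $G\cong(\ZZ_m\times\PSL(2,p)){:}\ZZ_2$, your route through the embedding $G\hookrightarrow\D_{2m}\times\PGL(2,p)$ and the $2$-part count is correct but considerably more work than the paper's one-liner, which just notes $|G|_2\ge 8$ and reruns the $\PSL$ argument to force the Sylow $2$-subgroup into $G_e$.
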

\begin{proof}
By Lemma~\ref{AutM-structure}, we have that $G\cong\PSL(2,p)$, or $(\ZZ_m\times\PSL(2,p)){:}\ZZ_2$,
where $m$ and $|\PSL(2,p)|$ are coprime.

(1). First, assume that $G$ is transitive on the face set $F$.
Let $(\a,e,f)$ be a flag of $\calM$.
By Lemma~\ref{(chi,|A|)=1}, each Sylow subgroup of $G$ has order dividing $|G_\a|$, $|G_e|$ or $|G_f|$.
In particular, each prime divisor of $|G|$ divides $|G_\a|$, $|G_e|$ or $|G_f|$.

Let $G\cong\PSL(2,p)$ first.
Note that $|G_e|$ divides $4$.
Considering the duality of maps, we may assume that $p$ divides $|G_\a|$.
Then 
\[G_\a\cong \D_{2p},\]
by the list of dihedral subgroups of $G$ given in Lemma~\ref{PSL-subgps}.
As $|G|=\lcm(2p,|G_f|,|G_e|)$ with $|G_e|=2$ or $4$, we have that $|G|_{\{2,p\}'}=(p+1)_{2'}(p-1)_{2'}$ divides $|G_f|$. 
If $G_f$ further contains a Sylow $2$-subgroup of $G$, then $|G_f|=(p+1)(p-1)$.
However, there does not exists such cyclic or dihedral subgroup of $G$ according to Lemma~\ref{PSL-subgps}.
Thus $|G|_2$ divides $G_e$, and so $p=5$ and $G_e\cong \D_4$.

%
%

Now let $G\cong(\ZZ_m\times\PSL(2,p)){:}\ZZ_2$.
Then $|G_2|\geqslant 8$.
Arguing as above shows that $|G|_2$ divides $|G_e|$, which is not possible since $|G_e|$ divides $4$.

Therefore, we conclude that $G\cong\PSL(2,5)\cong\A_5$ if $G$ is transitive on $F$.
Moreover, there are $G_\a\cong\D_{10}$, $G_e\cong\D_4$ and $G_f\cong\D_6$.
So the underlying graph of $\calM$ is the complete graph $\K_6$.
As $G_e\cong \D_4$, $G$ is flag-transitive on $\calM$, and so $G=\Aut(\calM)$.
The Euler characteristic $\chi(\calM)=|V|+|F|-|E|=6+10-15=1$.
It follows that the supporting surface is the non-orientable surface of genus $1$, namely the projective plane, as in part~(1).

(2). Assume that $G$ is intransitive on $F$.
Suppose that $\Aut(\calM)>G$.
Then $|\Aut(\calM):G|=2$, $\Aut(\calM)$ is flag-transitive on $\calM$, and so $\Aut(\calM)$ transitive on $F$.
By part~(1), $\Aut(\calM)\cong\A_5$ is a simple group.
Contradiction comes to that $|\Aut(\calM):G|=2$.
Thus, $\Aut(\calM)=G$, and $\calM$ is a reversing map by the definition, see Lemma~\ref{cons-maps}.
This completes the proof.
\end{proof}

\section{The proof of Theorem~\ref{mainThm}}\label{sec:pf}

To complete the proof of Theorem~\ref{mainThm}, by Proposition~\ref{prop:face-trans}, we consider the maps satisfying the following hypothesis.

\begin{hypothesis}\label{hypo-2}
	{\rm
	Let $\calM=(V,E,F)$ be a map, with the following assumptions:
	\begin{enumerate}[{\rm (1)}]
		\item $\gcd(\chi(\calM),|E|)=1$;\vskip0.1in
		\item $\calM$ is a reversing map with $G=\Aut(\calM)$ is non-solvable.
	\end{enumerate}
}
\end{hypothesis}
%
%
Let $(\a,e,f)$ and $(\a,e,f')$ be the two flags of $\calM$.
We shall determine the stabilisers $\{G_\a,G_f,G_{f'}\}$ in Subsection~\ref{subsec:stabs}, and then determine involutions $x,y,z\in G$ such that $\{\l x,y\r,\l y,z\r,\l z,x\r\}=\{G_\a,G_f,G_{f'}\}$ in Subsection~\ref{subsec:xyz}.

\subsection{Stabilizers}\label{subsec:stabs}\

We first establish a useful lemma for determining stabilisers.
		
\begin{lemma}\label{lem:order=p-1}
Let $\PSL(2,p)=K<H=\PGL(2,p)$, where $p$ is a prime and $p\equiv \varepsilon \pmod 4$, with $\varepsilon=1$ or $-1$.
Pick two involutions $u,v\in G$.
\begin{enumerate}[\rm(1)]
\item If $\l u,v\r\cong\D_{2(p+\varepsilon)}$, then one of $u,v$ lies in $K$, and the other lies in $H{\setminus} K$.\vskip0.1in
\item For $\l u,v\r\cong\D_{2p}$, either $u,v\in K$ with $p\equiv1\pmod 4$, or $u,v\notin K$ with $p\equiv3\pmod 4$.
\end{enumerate}
\end{lemma}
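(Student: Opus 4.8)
The statement is about which conjugacy class of involutions in $H=\PGL(2,p)$ the generators of a dihedral subgroup must lie in, so the natural tool is the structure of involutions and dihedral subgroups of $\PGL(2,p)$ acting on the projective line $\PG(1,p)$. Recall that $\PGL(2,p)$ has two classes of involutions: those in $K=\PSL(2,p)$, which are represented by diagonalizable elements of determinant a nonzero square (equivalently, elements fixing two points of $\PG(1,p)$), and those in $H\setminus K$, represented by elements of determinant a nonsquare (which fix no point of $\PG(1,p)$ when $p\equiv 1\pmod 4$, or two points when $p\equiv 3\pmod 4$ — more precisely the "parity" of an involution is governed by whether $-1$ is a square mod $p$). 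The key numerical fact I would record first: a cyclic subgroup of order $t$ in $H$ is contained in $K$ iff its generator is a square in the relevant cyclic torus, and the dihedral group $\D_{2t}$ generated by two involutions $u,v$ has $uv$ of order $t$; whether $u,v\in K$ is then determined by the class of $uv$ together with the classes of $u,v$, via the homomorphism $H\to H/K\cong\ZZ_2$.

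\textbf{Part (1).} Suppose $\l u,v\r\cong\D_{2(p+\varepsilon)}$. Then $uv$ has order $p+\varepsilon$. I would first locate $\l uv\r$: a cyclic group of order $p+\varepsilon$ sits inside the torus of order $p-\varepsilon$ of $H$ only if... — rather, $p+\varepsilon \in \{p+1,p-1\}$ is exactly one of the two torus orders $d(p\pm1)/2$ with $d=2$, so $\l uv\r$ is a full torus (split or nonsplit depending on $\varepsilon$) of $H$ of order $p+\varepsilon$, containing a unique subgroup of order $p+\varepsilon$, half of whose... Here I note that the cyclic torus $T$ of order $p+\varepsilon$ in $H$ meets $K$ in its subgroup of index $2$, i.e.\ $T\cap K$ has order $(p+\varepsilon)/2$. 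Since $p+\varepsilon$ is even and $(p+\varepsilon)/2$ is odd (as $p+\varepsilon\equiv 2\pmod 4$ when $p\equiv\varepsilon$), the element $uv$, which generates $T$, does not lie in $K$. Now apply the homomorphism $\pi\colon H\to H/K\cong\ZZ_2$: $\pi(uv)\ne 1$ forces $\pi(u)\ne\pi(v)$, i.e.\ exactly one of $u,v$ lies in $K$ and the other in $H\setminus K$, which is the claim. I'd also double-check that such a configuration genuinely occurs (it does, since $\D_{2(p+\varepsilon)}\le\PGL(2,p)$ by Lemma~\ref{PSL-subgps}), though for the stated implication only the forward direction is needed.

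\textbf{Part (2).} Now $\l u,v\r\cong\D_{2p}$, so $uv$ has order $p$ and $\l uv\r$ is a Sylow $p$-subgroup, which lies in $K$ (the $p$-part of $\PGL(2,p)$ equals that of $\PSL(2,p)$). Hence $\pi(uv)=1$, so $\pi(u)=\pi(v)$: either both $u,v\in K$ or both $u,v\in H\setminus K$ — the two cases of the statement. It remains to tie each case to the congruence on $p$. For this I use that the dihedral group $\D_{2p}=\l uv\r{:}\l u\r$ normalizes the Sylow $p$-subgroup $\l uv\r$, so it sits inside $\N_H(\l uv\r)$, the Borel-type subgroup $\ZZ_p{:}\ZZ_{p-1}$ of $H$ (or $\ZZ_p{:}\ZZ_{(p-1)/2}$ inside $K$). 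An involution inverting $\l uv\r$ corresponds to the unique involution in the complement $\ZZ_{p-1}$; that involution lies in $K$ precisely when the complement $\ZZ_{p-1}$ of $H$ has its involution inside the index-$2$ subgroup $\ZZ_{(p-1)/2}$ — equivalently when $(p-1)/2$ is even, i.e.\ $p\equiv 1\pmod 4$. So if $p\equiv1\pmod4$ the inverting involutions lie in $K$, giving $u,v\in K$; if $p\equiv 3\pmod 4$ then $(p-1)/2$ is odd and the inverting involution of $\N_H(\l uv\r)$ lies outside $K$, forcing $u,v\notin K$. (Equivalently, one can phrase this via the fact that $K$ contains a $\D_{2p}$ iff $\D_p=\ZZ_p{:}\ZZ_{(p-1)/2}$... — but the cleanest is the parity of $(p-1)/2$.)

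\textbf{Main obstacle.} The routine parts are bookkeeping about torus orders and the homomorphism to $\ZZ_2$. The genuinely delicate point is Part~(2)'s identification of \emph{which} class the inverting involution belongs to: one must argue that all involutions inverting a fixed Sylow $p$-subgroup are $H$-conjugate (they are, being the involutions in a single cyclic $\ZZ_{p-1}$, all conjugate under $\N_H(\ZZ_p)$), and then correctly compute whether that common class meets $K$ by tracking the square/nonsquare determinant condition through the congruence $p\equiv\pm1\pmod4$. I would handle this by an explicit matrix model: take $\l uv\r=\{\begin{psmallmatrix}1&t\\0&1\end{psmallmatrix}\}$, take the inverting involution to be (the image of) $\begin{psmallmatrix}1&0\\0&-1\end{psmallmatrix}$, whose determinant $-1$ is a square in $\FF_p$ iff $p\equiv1\pmod4$ — this immediately gives the case split and is the crux of the lemma.
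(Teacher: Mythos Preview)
Your proof is correct and follows essentially the same approach as the paper: both arguments use the quotient $\pi\colon H\to H/K\cong\ZZ_2$ for Part~(1) and embed $\D_{2p}$ in the Borel subgroup $\ZZ_p{:}\ZZ_{p-1}$ for Part~(2), then decide membership in $K$ by the parity of $(p-1)/2$. Your Part~(1) is in fact slightly more streamlined than the paper's (you show $uv\notin K$ directly, while the paper separately rules out ``both in $K$'' and ``both outside $K$''), and your explicit matrix model $\bigl(\begin{smallmatrix}1&0\\0&-1\end{smallmatrix}\bigr)$ with $\det=-1$ a square iff $p\equiv1\pmod4$ is a nice concrete justification of the step the paper leaves as ``$b^{(p-1)/2}\in K$''.
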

\begin{proof}
Let $D=\l u,v\r$.
Assume first that $D\cong\D_{2(p+\varepsilon)}$.
Then $D\cap K\cong\D_{p+\varepsilon}$, and so one of $u,v$ does not belong to $K$.
Suppose that none of $u,v$ lies in $K$.
Then $D=(D\cap K){:}\l u\r=(D\cap K){:}\l v\r$.
Let $\overline{u}$ and $\overline{v}$ be images of $u$ and $v$ under the natural homomorphism from $D$ to $D/(D\cap K)$, respectively.
Then $\ov u=\ov v$, and so $uv\in D\cap K$, which is not possible since $D\cap K\cong\D_{p+\varepsilon}$ does not have an element of order $p+{\varepsilon}$.
Thus one of $u,v$ lies in $K$, and the other lies in $H{\setminus} K$.
			
Now assume that $D\cong\D_{2p}$.
There exists a maximal subgroup 
\[M=\l a\r{:}\l b\r\cong\ZZ_p{:}\ZZ_{p-1}\] 
of $H$ that contains $D$.
It follows that $u=a^ib^{\frac{p-1}{2}}$ and $v=a^jb^{\frac{p-1}{2}}$, where $i\neq j\in \{1,...,p\}$.
The intersection 
\[M\cap K\cong\ZZ_p{:}\ZZ_{\frac{p-1}{2}}.\]
If $p\equiv3\pmod 4$, then $M\cap K$ is of odd order.
Hence we have that $u,v\notin K$.
If $p\equiv1\pmod 4$, then $b^{\frac{p-1}{2}}\in K$.
Therefore, we obtain that $u,v\in K$.
\end{proof}

The next lemma determines the stabiliser set $\{G_\a,G_f,G_{f'}\}$.

\begin{lemma}\label{lem:subgps}
Let $\calM$ be a map and $G=\Aut(\calM)$ with Hypothesis~\ref{hypo-2}.
Then one of the following holds:
\begin{enumerate}[{\rm(i)}]
\item $G\cong\PSL(2,p)$, and $\{G_\a,G_f,G_{f'}\}=\{\D_{2p},\D_{p+1},\D_{p-1}\}$ with $p\equiv 1\pmod 4$;\vskip0.1in

\item $G\cong\PGL(2,p)$, and  $\{G_\a,G_f,G_{f'}\}=\{\D_{2p},\D_{2(p+1)},\D_{2(p-1)}\}$;\vskip0.1in

\item $G\cong(\ZZ_m\times\PSL(2,p)){:}\ZZ_2$, and $\{G_\a,G_f,G_{f'}\}=\{\D_{2mp},\D_{2(p+1)},\D_{2(p-1)}\}$, where $m>1$, and $p\equiv 3\pmod 4$.
\end{enumerate}
\end{lemma}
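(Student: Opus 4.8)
The plan is to combine the structural information from Lemma~\ref{AutM-structure} with the ``coprimeness'' constraint from Lemma~\ref{(chi,|A|)=1}(1)--(3) and the subgroup list of $\PSL(2,p)$/$\PGL(2,p)$ from Lemma~\ref{PSL-subgps}. By Lemma~\ref{AutM-structure} together with Proposition~\ref{prop:face-trans}(2), we are in the reversing case with $G$ intransitive on $F$, and $G\cong\PSL(2,p)$ or $G\cong(\ZZ_m\times\PSL(2,p)){:}\ZZ_2$ (homomorphic onto both $\D_{2m}$ and $\PGL(2,p)$). In either case $G$ is generated by three involutions $x,y,z$ with $\{G_\a,G_f,G_{f'}\}=\{\l x,y\r,\l x,z\r,\l y,z\r\}$, all three being dihedral subgroups (possibly $\ZZ_2$ or $\ZZ_2^2$ in degenerate cases, which we must rule out). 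The key arithmetic fact is that, since each Sylow subgroup of $G$ is cyclic or dihedral and by Lemma~\ref{(chi,|A|)=1}(1),(3), every prime $r\mid|G|$ divides at least one of $|G_\a|,|G_f|,|G_{f'}|$, and in fact a full Sylow $r$-subgroup is contained in one of these three stabilizers; moreover $|G|=\lcm\{|G_\a|,|G_f|,|G_{f'}|\}$.

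First I would handle $G\cong\PSL(2,p)$. Since $|G|=\frac{1}{2}p(p-1)(p+1)$ and the prime $p$ must divide one of the three stabilizer orders, Lemma~\ref{PSL-subgps} forces that stabilizer to be $\D_{2p}$ (the only dihedral subgroup with order divisible by $p$). Up to relabeling say $G_\a\cong\D_{2p}$. Then $\lcm\{|G_f|,|G_{f'}|\}$ must be divisible by $\frac{(p-1)(p+1)}{2}$ together with the full $2$-part of $|G|$. Since a Sylow $2$-subgroup of $\PSL(2,p)$ is dihedral of order $(p-\varepsilon)_2$ where $p\equiv\varepsilon\pmod4$, and since every prime divisor of $\frac{p^2-1}{2}$ must be covered, a short divisibility analysis using the list $D\le\D_{p+1}$ or $\D_{p-1}$ shows the only possibility is $\{G_f,G_{f'}\}=\{\D_{p+1},\D_{p-1}\}$, and that this forces the $2$-parts to match up correctly only when $p\equiv1\pmod4$ (so that $(p-1)_2$, which is the larger, is realized inside $\D_{p-1}$ while $\D_{p+1}$ carries the rest) --- the case $p\equiv3\pmod4$ would demand the Sylow $2$-subgroup to sit inside $\D_{p+1}$, but then the prime $2$ together with all odd primes dividing $p-1$ cannot simultaneously be covered, giving a contradiction. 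This yields (i).

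Next, for $G\cong(\ZZ_m\times\PSL(2,p)){:}\ZZ_2$: here $G$ surjects onto $\PGL(2,p)$ with kernel $\ZZ_m$, and onto $\D_{2m}$. A Sylow $2$-subgroup of $G$ has order $2(p-\varepsilon)_2\ge8$, dihedral. The prime $p$ again forces one stabilizer, say $G_\a$, to contain a Sylow $p$-subgroup; pulling back through the quotients and using that $\gcd(m,|\PSL(2,p)|)=1$, the normalizer structure forces $G_\a\cong\D_{2mp}$ when $m>1$ (the $\ZZ_m$ factor is normal and central modulo the simple part), while if $m=1$ we recover $G\cong\PGL(2,p)$ and $G_\a\cong\D_{2p}$. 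Then, as in the previous case, $\{G_f,G_{f'}\}$ must cover $\frac{(p-1)(p+1)}{2}\cdot d/1$ of the remaining part and the full $2$-part $2(p-\varepsilon)_2$; checking against the maximal dihedral subgroups $\D_{d(p+1)},\D_{d(p-1)}$ of Lemma~\ref{PSL-subgps} with $d=2$ shows the unique solution is $\{G_f,G_{f'}\}=\{\D_{2(p+1)},\D_{2(p-1)}\}$, giving (ii) when $m=1$ and (iii) when $m>1$. Finally I would record why $m>1$ in (iii) forces $p\equiv3\pmod4$: this is precisely Lemma~\ref{lem:order=p-1}(2) applied to the dihedral subgroup $G_\a$, since its $\D_{2p}$-part must lie with both generating involutions outside $\PSL(2,p)$ for the extension by $\ZZ_m$ to be consistent with $G$ being generated by involutions and mapping onto $\D_{2m}$ nontrivially.

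The main obstacle I anticipate is the bookkeeping in the divisibility/covering argument: ruling out mixed possibilities such as $\{G_f,G_{f'}\}=\{\D_{2p},\D_{p-1}\}$ or stabilizers that are cyclic rather than dihedral, and especially pinning down exactly which of $p+1$, $p-1$ carries the $2$-part in each congruence class of $p\bmod4$ --- this is where the condition $p\equiv1\pmod4$ in (i) versus the absence of such a restriction in (ii) genuinely comes from, and it requires careful use of both Lemma~\ref{PSL-subgps} (which dihedral subgroups exist) and Lemma~\ref{lem:order=p-1} (membership of the generating involutions in $K$ versus $H\setminus K$), together with the fact that $\calM$ being a \emph{reversing} map means all three of $x,y,z$ are orientation-reversing, which constrains how the involutions distribute between $\PSL(2,p)$ and its outer coset.
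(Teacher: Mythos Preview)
Your overall strategy matches the paper's proof: combine Lemma~\ref{AutM-structure}, Lemma~\ref{(chi,|A|)=1}, and the dihedral subgroup list in Lemma~\ref{PSL-subgps}, and proceed case by case. However, there is a genuine gap in your treatment of case~(i).

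Your justification for the congruence $p\equiv1\pmod4$ when $G\cong\PSL(2,p)$ is incorrect. You claim that if $p\equiv3\pmod4$ then ``the prime $2$ together with all odd primes dividing $p-1$ cannot simultaneously be covered,'' but in fact the covering works perfectly well: with $p\equiv3\pmod4$ one has $|G|_2=(p+1)_2$, which sits inside $\D_{p+1}$, while all odd primes dividing $p-1$ are covered by $\D_{p-1}$, and indeed $\lcm(2p,\,p+1,\,p-1)=p(p^2-1)/2=|\PSL(2,p)|$. So your divisibility argument does not exclude $p\equiv3\pmod4$. The real reason, and the one the paper uses, is purely group-theoretic: the normalizer of a Sylow $p$-subgroup in $\PSL(2,p)$ is $\ZZ_p{:}\ZZ_{(p-1)/2}$, so $\PSL(2,p)$ contains a subgroup $\D_{2p}$ \emph{only if} $(p-1)/2$ is even, i.e.\ $p\equiv1\pmod4$. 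This is immediate and does not require any lcm bookkeeping.

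For case~(ii) you are vague where the paper is concrete: once you know $\{G_{\omega_1},G_{\omega_2}\}\supseteq\{\D_{2p},\D_{2(p-\varepsilon)}\}$, you must still rule out $G_{\omega_3}\cong\D_{p+\varepsilon}$ (which does satisfy the lcm condition). The paper does this by noting that $\D_{p+\varepsilon}\le L=\PSL(2,p)$, and then deriving a contradiction either from $\l G_{\omega_1},G_{\omega_3}\r\le L$ (when $\varepsilon=1$) or from $G_{\omega_1}\cap G_{\omega_3}=1$ failing to contain the shared involution (when $\varepsilon=-1$). Your sketch does not supply this step. Similarly, in case~(iii) the paper's exclusion of $p\equiv1\pmod4$ is not just Lemma~\ref{lem:order=p-1}(2) applied to $G_\a$; it also uses that if $u,v\in L$ then $u$ centralizes $c$, forcing $|\l u,w\r|=2(p+\varepsilon)$ with no room for a nontrivial factor of $m$, so the lcm condition fails. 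Your one-sentence appeal to ``mapping onto $\D_{2m}$ nontrivially'' gestures at this but does not carry the argument.
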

\begin{proof}
For convenience, we denote $\{\a,f,f'\}$ by $\{\o_1,\o_2,\o_3\}$.

(1). First, assume that $G\cong\PSL(2,p)$.
Then a maximal dihedral subgroup of $G$ is conjugate to $\D_{2p}$, $\D_{p+1}$ or $\D_{p-1}$.
Recall that each Sylow subgroup of $G$ is a subgroup of $G_\a$, $G_f$ or $G_{f'}$ by Lemma~\ref{(chi,|A|)=1}.
Then $p$ divides $|G_{\o_1}|$ for some $\o_1\in\{\a,f,f'\}$, and so
\[\mbox{$G_{\o_1}\cong\D_{2p}<\ZZ_p{:}\ZZ_{p-1\over2}$.}\]
We thus have that ${p-1\over2}$ is even, and $p\equiv 1\pmod 4$.

We therefore assume that a Sylow 2-subgroup of $G$ is contained in the stabiliser $G_{\o_2}$, and hence $G_{\o_2}\le\D_{p-1}$ by Lemma~\ref{PSL-subgps}.
Suppose that $G_{\o_2}<\D_{p-1}$.
Since $G_{\o_2}$ contains a Sylow 2-subgroup of $\D_{p-1}$, there exists an odd prime $r$ of $p-1$ such that $|G_{\o_2}|_r<|p-1|_r$.
It follows that a Sylow $r$-subgroup of $G$ is contained in $G_{\o_3}$.
Hence there is $G_{\o_3}\leqslant \D_{p-1}$ by Lemma~\ref{PSL-subgps}, and so $\gcd(|G_{\o_3}|,p+1)$ divides $2$.
Contradiction comes to $|G|=\lcm\{|G_{\o_1}|,|G_{\o_2}|,|G_{\o_3}|\}$.
Thus there is $G_{\o_2}\cong\D_{p-1}$.

Above all, we have that $G_{\o_1}\cong\D_{2p}$ and $G_{\o_2}\cong\D_{p-1}$.
By Lemma~\ref{(chi,|A|)=1}, $|G_{\o_3}|$ is divisible by ${p+1\over2}$, and $G_{\o_3}$ is a dihedral group.
Hence $G_{\o_3}\cong\D_{p+1}$.
So there is $\{G_\a,G_f,G_{f'}\}=\{|G_{\o_1}|,|G_{\o_2}|,|G_{\o_3}|\}=\{\D_{2p},\D_{p+1},\D_{p-1}\}$, as in part~(1).

(2). Assume that $G\cong\PGL(2,p)$, and $G_{\o_1}\cong\D_{2p}$ for a $\o_1\in \{\a,f,f'\}$.
Again, we have that
\[\mbox{$G_{\o_1}\cong\D_{2p}<\ZZ_p{:}\ZZ_{p-1}$.}\]

Without loss of generality, we may assume that $G_{\o_2}$ contains a Sylow $2$-subgroup of $G$.
Then $G_{\o_2}\le \D_{2(p-\varepsilon)}$, where $p-\varepsilon$ is divisible by $4$.
Arguing as in Case~(1) shows that each odd prime divisor of $2(p-\varepsilon)$ divides $|G_{\o_2}|$, and so we have that \[G_{\o_2}\cong\D_{2(p-\varepsilon)}.\]

Finally, as $p(p-1)(p+1)=\lcm(|G_\a|,|G_f|,|G_{f'}|)=\lcm\{2p,2(p-\varepsilon),|G_{\o_3}|\}$, we have that $|G_{\o_3}|$ is divisible by ${p+\varepsilon\over2}$.
Since $G_{\o_3}$ is a dihedral group by Lemma~\ref{(chi,|A|)=1}, we obtain that $G_{\o_3}\cong\D_{p+\varepsilon}$ or $\D_{2(p+\varepsilon)}$.

Suppose that $G_{\o_3}\cong\D_{p+\varepsilon}$.
Then $G_{\o_3}<L$.
If $\varepsilon=1$, then $\D_{2p}\cong G_{\o_1}\le L$, and so $G=\l G_{\o_1},G_{\o_3}\r\le L$, which is a contradiction.
Thus $\varepsilon=-1$, and $\frac{p-1}{2}$ is odd, so that $G_{\o_1}\cap L\cong \ZZ_p$.
Hence $G_{\o_1}\cap G_{\o_3}\le G_{\o_3}\cap L=1$, which is a contradiction since $G_{\o_1}\cap G_{\o_3}$ should contain one of the involutions $x,y$ and $z$.
Thus we conclude that $G_{\o_3}\cong\D_{2(p+\varepsilon)}$, as in pair~(ii).

(3). Assume that $G\cong(\ZZ_m\times\PSL(2,p)){:}\ZZ_2$.
Let $R=\l c\r\cong \ZZ_m$,  $\ov G=G/R$, and $\overline{G_\a}$, $\overline{G_f}$, $\overline{G_{f'}}$ be images of $G_\a$, $G_f$, $G_{f'}$, respectively.
Then $\ov G\cong\PGL(2,p)$.
Since $\gcd(|R|,|\overline{G}|)=1$, we have that 
\[|\overline{G}|=\lcm(|\overline{G_\a}|, |\overline{G_f}|, |\overline{G_{f'}}| ).\]
Hence, by part~(ii), we have that
\[\{\ov G_\a,\ov G_f,\ov G_{f'}\}=\{\D_{2p},\D_{2(p+1)},\D_{2(p-1)}\}.\]
As $\calM$ is a reversing map, there exist involutions $u,v,w\in G$ such that $G_{\o_1}=\l u,v\r$, $G_{\o_2}=\l u,w\r$ and $G_{\o_3}=\l v,w\r$.
Without loss of generality, we assume that $|G_{\o_1}|$ is  divisible by $p$.
Then $G_{\o_1}\cong\D_{2m'p}$, for some divisor $m'$ of $ m$.

Suppose that $p\equiv1\pmod 4$.
It follows from Lemma~\ref{lem:order=p-1} that $u,v\in L$, and $m'=1$, namely, $G_{\o_1}=\l u,v\r\cong\D_{2p}$.
Since $|G|=\lcm\{|G_{\o_1}|,|G_{\o_2}|,|G_{\o_3}|\}$, we may assume, without loss of generality, that  $G_{\o_2}\cap\l c\r=\l c'\r\not=1$, and so $G_{\o_2}\cong \D_{2m'(p+\varepsilon)}$ with $|c'|=m'>1$ and $\varepsilon=1$ or $-1$.
As $u$ centralizes $c$ and $p+\varepsilon$ is even, $G_{\o_2}=\l u,w\r$ has order $2(p+\varepsilon)$, which is a contradiction.

We therefore conclude that $p\equiv3\pmod 4$.
Then $u,v\notin L$, $G_{\o_2}=\l u,w\r$ is of even order divisible by $p+\varepsilon$ and $G_{\o_3}=\l v,w\r$ is of even order divisible by $p-\varepsilon$, where $\varepsilon=1$ or $-1$.
By Lemma~\ref{lem:order=p-1} that $u,v\in G{\setminus}(\l c\r\times L)$, and $w\in L$.
Thus $w$ centralizes $c$.
It follows that $|c|$ is coprime to both $|G_{\o_2}|$ and $|G_{\o_3}|$, and so
\[\{G_{\o_2},G_{\o_3}\}=
\begin{array}{l}
\{\D_{2(p+1)},\D_{2(p-1)}\}.
\end{array}\]
Since $|G|=\lcm\{|G_{\o_1}|,|G_{\o_2}|,|G_{\o_3}|\}$, we conclude that $G_{\o_1}\cong \D_{2mp}$, which is as described in part~(iii).
\end{proof}

\subsection{Arc-regular triples}\label{subsec:xyz}\

Lemma~\ref{AutM-structure} determines the automorphism group $G$, and 
Lemma~\ref{lem:subgps} determines the stabilisers $G_\a,G_f$ and $G_{f'}$.
In this subsection, we determine reversing triples $(x,y,z)$ of $G$ such that 
\[\{\l x,y\r,\l y,z\r,\l z,x\r\}=\{G_\a,G_f,G_{f'}\},\]
so that we obtain the maps $\calM=\RevMap(G,x,y,z)$ by Lemma~\ref{cons-maps}.
The arguments depend on analysing the actions of the linear groups on the projective space of $\FF_p^2$.

Let $\calP$ be the set of projective points of the vector space $\FF_p^2$, and let
\[\calP=\{\d_0,\d_1,...,\d_p\},\]
and write $(\s,\t)=(\d_0,\d_1)$, for convenience.
The following lemma collects some well-known properties of $\PGL(2,p)$ and $\PSL(2,p)$, which will be frequently cited in the ensuing arguments.

\begin{lemma}\label{lem:PGL-action}
Let $G=\PGL(2,p)$, and $L=\PSL(2,p)$.
Then the following hold:
\begin{enumerate}[{\rm(1)}]
\item all involutions of $L$ are conjugate;\vskip0.1in

\item all involutions lying in $G{\setminus} L$ are conjugate;\vskip0.1in

\item $G$ is a sharply $3$-transitive permutation group on $\calP$, and the following hold:
\begin{enumerate}[{\rm(i)}]
\item $G_{\s\t}\cong \ZZ_{p-1}$ is sharply transitive on $\calP{\setminus}\{\s,\t\}=\{\d_2,...,\d_p\}$;\vskip0.1in
\item each cyclic subgroup of $G$ of order $p+1$ is regular on $\calP$;\vskip0.1in
\item for $p\equiv1$ $(\mod 4)$, each involution in $L$ fixes exactly two points of $\calP$;\vskip0.1in
\item for $p\equiv3$ $(\mod 4)$, each involution in $G{\setminus} L$ fixes exactly two points of $\calP$.
\end{enumerate}
\end{enumerate}
\end{lemma}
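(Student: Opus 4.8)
The plan is to verify each clause of Lemma~\ref{lem:PGL-action} by recalling the standard representation of $\PGL(2,p)$ as fractional linear transformations on $\calP=\mathbb{P}^1(\FF_p)$, and then reading off the stated facts about involutions and point-stabilisers. First I would set up the action: $G=\PGL(2,p)$ acts on the $p+1$ projective points $\calP$ with $\s=[1:0]$ and $\t=[0:1]$, the subgroup $L=\PSL(2,p)$ being the image of matrices of square determinant. For part~(3) itself, sharp $3$-transitivity of $\PGL(2,p)$ on $\calP$ is classical (any ordered triple of distinct points can be sent to $(\s,\t,\d_2)$ by a unique element). From this the stabiliser $G_{\s\t}$ of the ordered pair consists of the diagonal maps $z\mapsto \lambda z$, which form a cyclic group of order $p-1$ acting sharply transitively on the remaining $p-1$ points $\{\d_2,\dots,\d_p\}$, giving (i); and a cyclic subgroup of order $p+1$ is a non-split torus, which has no fixed points on $\calP$ (its fixed points would lie in $\mathbb{P}^1(\FF_{p^2})\setminus\mathbb{P}^1(\FF_p)$) and hence, having order $p+1=|\calP|$, acts regularly, giving (ii).

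Next I would handle the conjugacy statements (1) and (2) together with (iii) and (iv). An involution of $\PGL(2,p)$ is represented by a matrix $A$ with $A^2$ scalar and $A$ not scalar, so its eigenvalues are $\pm\mu$; after scaling, $A$ is conjugate either to $\mathrm{diag}(1,-1)$ (eigenvalues in $\FF_p$, two fixed points in $\calP$) or to a matrix with eigenvalues in $\FF_{p^2}\setminus\FF_p$ (no fixed points in $\calP$, and acting as a product of $(p+1)/2$ transpositions). Computing determinants: $\mathrm{diag}(1,-1)$ has determinant $-1$, which is a square in $\FF_p$ exactly when $p\equiv 1\pmod 4$; the fixed-point-free type has determinant a non-square, i.e. it is a square exactly when $p\equiv 3\pmod 4$ after one checks the norm condition. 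This simultaneously shows: the two $\PGL$-conjugacy classes of involutions are exactly ``lies in $L$'' versus ``lies in $G\setminus L$'' (each a single class, since within $\GL$-conjugacy the diagonalisable type is one class and so is the other, and passing to $\PGL$ does not split them — this uses that the centraliser arguments / the torus structure are transitive), giving (1) and (2); and for $p\equiv 1\pmod 4$ the involutions in $L$ are the diagonalisable ones with exactly two fixed points, giving (iii), while for $p\equiv 3\pmod 4$ the involutions in $G\setminus L$ are the diagonalisable ones, again with exactly two fixed points, giving (iv).

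I expect the main obstacle to be the conjugacy assertions (1) and (2): showing that ``diagonalisable with determinant a square'' really is a single $\PGL(2,p)$-conjugacy class (and likewise the other type) requires a little care, since $\GL(2,p)$-conjugacy of two matrices $A,B$ with the same eigenvalues needs the conjugating matrix to have the right determinant to descend to $L$ rather than merely to $\PGL$. The clean way is to note that all involutions with two fixed points are conjugate \emph{in} $\PGL(2,p)$ by sharp $3$-transitivity (move the fixed-point pair to $\{\s,\t\}$, then the involution becomes $z\mapsto -z$ uniquely), and then to observe that this common class lies inside $L$ precisely when $p\equiv 1\pmod 4$; the fixed-point-free involutions form the other class by elimination, since an involution either has $0$ or $2$ fixed points on $\calP$. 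Everything else is a direct transcription of standard facts about $\PGL(2,p)$, so no genuinely hard computation should be needed, and I would simply cite \cite{low-dim} or \cite{huppert} for the routine parts rather than re-deriving them.
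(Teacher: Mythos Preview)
Your proposal is correct, but note that the paper does not actually prove this lemma: it is introduced with the sentence ``The following lemma collects some well-known properties of $\PGL(2,p)$ and $\PSL(2,p)$'' and is left unproved, to be cited as background. So there is no ``paper's proof'' to compare against; you have supplied a verification where the authors simply appeal to standard theory (implicitly via~\cite{low-dim}).

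Your argument is the standard one and is sound. The only place I would tighten is the conjugacy of the fixed-point-free involutions in part~(2): ``by elimination'' shows only that they form a union of $\PGL$-classes, not a single one. The cleanest fix is to observe that any fixed-point-free involution lies in (indeed generates the unique involution of) a cyclic subgroup of order $p+1$, and all such non-split tori are conjugate in $\PGL(2,p)$; alternatively, do the direct matrix computation you sketch, showing that for non-squares $\lambda,\lambda'$ the companion matrices of $X^2-\lambda$ and $X^2-\lambda'$ become $\PGL$-conjugate after scaling. Also, drop the reference to \texttt{huppert}, which is not in the paper's bibliography; \cite{low-dim} suffices.
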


We first construct reversing triples for groups $\PSL(2,p)$, beginning with an involution which fixes two points of $\calP$.

\begin{lemma}\label{lem:PSL-triple}
Let $\PSL(2,p)=L<G=\PGL(2,p)$, where $p\geqslant5$ is a prime with $p\equiv1$ $(\mod 4)$.
Let $z$ be the unique involution of $L_{\s\t}\cong\ZZ_{p-1\over2}$.
Then, for each integer $k$ with $2\leq k\leq p$, there exist involutions $x_k,y_k\in L_{\d_k}$ such that
\[\mbox{$\l z,x_k\r\cong\D_{p+1}$, $\l z,y_k\r\cong\D_{p-1}$, and $\l x_k,y_k\r\cong\D_{2p}$.}\]

Moreover, each reversing triple for $L$ can be obtained in this way.
\end{lemma}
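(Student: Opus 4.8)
The plan is to realise the whole situation concretely inside the natural action of $\PGL(2,p)$ on $\calP=\FF_p\cup\{\infty\}$, normalised so that $(\s,\t)=(0,\infty)$. Since $p\equiv1\pmod 4$ the scalar $-1$ is a square in $\FF_p$, so $\zeta\colon t\mapsto -t$ lies in $L_{\s\t}\cong\ZZ_{(p-1)/2}$; this cyclic group has even order, hence a single involution, which must be $z$, and by Lemma~\ref{lem:PGL-action}(3)(iii) the fixed points of $z$ are exactly $0$ and $\infty$. First I would reduce to one value of $k$: the group $G_{\s\t}\cong\ZZ_{p-1}$ is cyclic, hence centralises $z$, and by Lemma~\ref{lem:PGL-action}(3)(i) it is transitive on $\calP\setminus\{\s,\t\}=\{\d_2,\dots,\d_p\}$; conjugating by an element of $G_{\s\t}$ (which normalises $L$ and fixes $z$) carries one solution to any other, so it is enough to treat $\d_k=1$. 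I note also that $\l z,x\r\cong\D_{p+1}$ (resp. $\D_{p-1}$) if and only if $zx$ has order $(p+1)/2$ (resp. $(p-1)/2$), since in a dihedral group generated by two reflections their product generates the rotation subgroup.

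Next I would pin down the involutions of $L$ fixing the point $1$: they are exactly the elements $\iota_e$ for $e\in\calP\setminus\{1\}$, where $\iota_e$ is the unique involution of $L$ whose fixed point set is $\{1,e\}$ (unique because $L_{1e}\cong\ZZ_{(p-1)/2}$ contains a single involution), and these are precisely the $p$ involutions of the Borel $L_1$. Passing to $L_1/\O_p(L_1)\cong\ZZ_{(p-1)/2}$, any two involutions of $L_1$ have the same image there, so their product lies in $\O_p(L_1)$; hence any two distinct involutions of $L_1$ generate $\D_{2p}$. Therefore, once I produce $x_k,y_k\in L_1$ with $\l z,x_k\r\cong\D_{p+1}$ and $\l z,y_k\r\cong\D_{p-1}$ — which forces $x_k\ne y_k$, as these groups are non-isomorphic — the relation $\l x_k,y_k\r\cong\D_{2p}$ follows for free.

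The heart of the argument is a direct computation. For $e\in\FF_p^{*}\setminus\{1\}$ one has $\iota_e\colon x\mapsto\frac{(1+e)x-2e}{2x-(1+e)}$, so $z\iota_e$ is represented (modulo scalars) by $\bigl(\begin{smallmatrix}-(1+e)&2e\\2&-(1+e)\end{smallmatrix}\bigr)$, for which $\tr^{2}-4\det=16e$ and the ratio of eigenvalues is $\bigl(\tfrac{1-\sqrt e}{1+\sqrt e}\bigr)^{2}$. Hence $z\iota_e$ splits over $\FF_p$ with order dividing $(p-1)/2$ when $e$ is a nonzero square, and is of non-split type with order dividing $(p+1)/2$ when $e$ is a non-square, the exact order being that of the eigenvalue ratio. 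I would then let $e$ range over the nonzero squares $\ne1$: here $q:=\tfrac{1-\sqrt e}{1+\sqrt e}$ runs over $\FF_p^{*}\setminus\{\pm1\}$, so choosing $q$ to be a primitive root modulo $p$ makes $q^{2}$ generate the group of squares and gives $z\iota_e$ of order exactly $(p-1)/2$; set $y_k:=\iota_e$. Letting $e$ range over the non-squares, $\sqrt e$ runs over the solutions in $\FF_{p^{2}}$ of $t^{p}=-t$, and $q:=\tfrac{1-\sqrt e}{1+\sqrt e}$ over the cyclic subgroup of order $p+1$ in $\FF_{p^{2}}^{*}$ with $\pm1$ removed (because then $q^{p}=q^{-1}$), so $q^{2}$ runs over the subgroup of order $(p+1)/2$ with the identity removed; choosing $q^{2}$ to be a generator there — possible since $(p+1)/2\ge3$ — gives $z\iota_e$ of order exactly $(p+1)/2$; set $x_k:=\iota_e$. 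Conjugating back by the element of $G_{\s\t}$ used above then produces $x_k,y_k$ for every $k$.

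Finally, for the ``moreover'' clause I would argue as follows. Given a reversing triple for $L$, Lemma~\ref{lem:subgps}(i) shows its three pairwise subgroups are $\D_{2p},\D_{p+1},\D_{p-1}$; let $z'$ be the member of the triple not lying in the $\D_{2p}$-generating pair and $x',y'$ the other two, so that $\l x',y'\r\cong\D_{2p}$ and $\{\l z',x'\r,\l z',y'\r\}=\{\D_{p+1},\D_{p-1}\}$. All involutions of $L$ are conjugate (Lemma~\ref{lem:PGL-action}(1)), so after conjugation I may take $z'=z$; and $\l x',y'\r\cong\D_{2p}$ forces $x',y'$ into the Borel $L_{\d_k}$ fixing the unique point fixed by the normal Sylow $p$-subgroup of that $\D_{2p}$, so $(x',y')$ is exactly a valid pair $(x_k,y_k)$ produced above. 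Conversely each constructed triple $(x_k,y_k,z)$ generates $L$, since $\l z,x_k\r\cong\D_{p+1}$ is maximal in $L$ by Lemma~\ref{PSL-subgps} while $y_k\notin\l z,x_k\r$ (otherwise $\D_{p-1}\cong\l z,y_k\r\le\D_{p+1}$, impossible for $p\ge5$), so it is indeed a reversing triple. The step I expect to be the main obstacle is the eigenvalue-ratio computation together with the check that, as $e$ varies, this ratio sweeps out an entire cyclic group with only its identity element removed, so that the exact orders $(p\pm1)/2$ — not merely proper divisors of them — are attained; the remaining parts are routine bookkeeping with the subgroup structure of $\PSL(2,p)$.
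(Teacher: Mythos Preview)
Your proof is correct, and it shares with the paper the reduction step of using the cyclic group $G_{\s\t}\cong\ZZ_{p-1}$ (which centralises $z$ and is transitive on $\{\d_2,\dots,\d_p\}$) to pass between different values of $k$. Where the two arguments diverge is in establishing existence for a single $k$. The paper does this abstractly: since $L$ contains dihedral subgroups $\D_{p+1}$ and $\D_{p-1}$, and all involutions of $L$ are conjugate, one can conjugate so that $z$ is a generating reflection in copies of each, yielding involutions $u,v$ with $\l z,u\r\cong\D_{p+1}$ and $\l z,v\r\cong\D_{p-1}$; since $p\equiv1\pmod 4$, each of $u,v$ fixes a point outside $\{\s,\t\}$, and then transitivity moves these fixed points to any $\d_k$. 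You instead work in explicit coordinates, parametrise the $p$ involutions of $L_1$ by their second fixed point $e$, compute the eigenvalue ratio of $z\iota_e$ as $\bigl((1-\sqrt e)/(1+\sqrt e)\bigr)^2$, and show directly that this ratio attains elements of exact order $(p\pm1)/2$ as $e$ ranges over squares and non-squares respectively.

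What each approach buys: the paper's argument is shorter and uses only the qualitative subgroup structure of $\PSL(2,p)$, at the cost of being purely existential. Your argument is longer but genuinely constructive --- one can write down the matrices --- and self-contained in that it does not presuppose the existence of the dihedral subgroups $\D_{p\pm1}$ inside $L$. You also supply two checks the paper leaves implicit: that any two distinct involutions of the Borel $L_{\d_k}$ automatically generate $\D_{2p}$ (via the quotient $L_{\d_k}/\O_p(L_{\d_k})$), and that the resulting triple actually generates $L$ (using maximality of $\D_{p+1}$). Both arguments handle the ``moreover'' clause in essentially the same way, via conjugacy of involutions and Lemma~\ref{lem:subgps}.
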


\begin{proof}
Since all involutions of $L$ are conjugate, there are involutions $u,v\in L$ such that $\l z, u\r\cong\D_{p+1}$ and $\l z,v\r\cong\D_{p-1}$.
Now $u$ fixes some point $\d_i$ and $v$ fixes some point $\d_j$ with $2\leq i,j\leq p$.

By Lemma~\ref{lem:PGL-action}, the group $G_{\s\t}\cong\ZZ_{p-1}$ is transitive on $\{\d_2,...,\d_p\}$.
Since $G_{\s\t}$ centralizes $z$, we have that the stabiliser $L_{\d_k}$ of each point $\d_k$ contains an involution $x_k$ such that
\[\l z,x_k\r\cong\D_{p+1},\ \mbox{with $2\leq k\leq p$}.\]
Similarly, the stabiliser $L_{\d_k}$ of each point $\d_k$ contains an involution $y_k$ such that
\[\l z,y_k\r\cong\D_{p-1},\ \mbox{with $2\leq k\leq p$}.\]

Finally, the two involutions $x_k,y_k$ fix the same point $\d_k$, and so $x_k,y_k\in L_{\d_k}$.
Thus $x_k,y_k$ generate a dihedral subgroup $\l x_k,y_k\r$ of $L_{\d_k}\cong\ZZ_p{:}\ZZ_{p-1\over2}$.
It follows that $\l x_k,y_k\r\cong\D_{2p}$.
Thus $(x_k,y_k,z)$ is a reversing triple for $L$ such that $\l z,x_k\r\cong\D_{p+1}$, $\l z,y_k\r\cong\D_{p-1}$, and $\l x_k,y_k\r\cong\D_{2p}$.

Conversely, let $(x,y,z)$ be a reversing triple for $L$.
Since all involutions of $L$ are conjugate and $|L_{\s\t}|$ is even, we may assume that $z\in L_{\s\t}$.
By Lemma~\ref{lem:subgps}, we may assume that $\l z,x\r\cong\D_{p+1}$ and $\l z,y\r\cong\D_{p-1}$.
Then $\l x,y\r\cong\D_{2p}$, and so $x,y$ fix the same point $\d_k$ with $2\leq k\leq p$, and
$(x,y,z)=(x_k,y_k,z)$, as required.
\end{proof}

Next, we construct reversing triples for groups $\PGL(2,p)$.

\begin{lemma}\label{lem:key-cons}
	Let $\PSL(2,p)=L<G=\PGL(2,p)$, where $p\geqslant5$ is a prime.
	Let $H$ be a cyclic subgroup of $G$ of order $p+1$, and $z$ be the unique involution of $H$.
	Then, for any point $\d\in\calP$ and $\varepsilon=1$ or $-1$, there exists an involution $w\in G_\d$ such that $\l z,w\r\cong \D_{2(p+\varepsilon)}$.
\end{lemma}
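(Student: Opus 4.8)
The plan is to exploit the sharp $3$-transitivity of $G=\PGL(2,p)$ on $\calP$ together with the fact that the cyclic group $H\cong\ZZ_{p+1}$ is regular on $\calP$ (Lemma~\ref{lem:PGL-action}(3)(ii)). The central involution $z\in H$ therefore acts fixed-point-freely on $\calP$, so $z$ partitions the $p+1$ points of $\calP$ into $\frac{p+1}{2}$ transpositions. Fix a point $\d\in\calP$; the key observation is that an involution $w\in G_\d$ with $\l z,w\r\cong\D_{2(p+\varepsilon)}$ exists precisely when we can find such a $w$ normalizing the appropriate cyclic subgroup, i.e.\ inverting an element of order $p+\varepsilon$. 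So first I would produce, for $\varepsilon=1$, an involution $w\in G_\d$ inverting $H$: since $H$ is regular on $\calP$ and $G$ is sharply $3$-transitive, the normalizer $\N_G(H)=H{:}\l w_0\r\cong\D_{2(p+1)}$ for some involution $w_0$, and $w_0$ fixes exactly two points of $\calP$ (the two $H$-orbit "ends" in the dihedral action on the $\calP$, equivalently the eigenpoints). Conjugating $H$ (hence $w_0$) by a suitable element of $G$, and using that $G$ is transitive on $\calP$, I can arrange that one of the two fixed points of the conjugated involution is exactly $\d$; this conjugate is the desired $w$, and $\l z,w\r=\N_G(H)^g\cong\D_{2(p+1)}$.

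For the case $\varepsilon=-1$ the argument is parallel but uses a cyclic subgroup $C\cong\ZZ_{p-1}$ in place of $H$. The point here is that $z$, being the unique involution of $H\cong\ZZ_{p+1}$, is $G$-conjugate to the unique involution of a chosen $\ZZ_{p-1}$ only when $p\equiv 3\pmod 4$, so I must be careful: by Lemma~\ref{lem:PGL-action}(1)--(2) the involutions of $G$ fall into two conjugacy classes (those in $L$ and those in $G\setminus L$), and which class $z$ lies in depends on $p\bmod 4$. So I would split on $p\bmod 4$: if $z\in L$ then there is a cyclic $C\cong\ZZ_{p-1}$ with $C\cap L\cong\ZZ_{\frac{p-1}{2}}$ containing $z$; if $z\notin L$ then $C$ must be taken with its full $\ZZ_{p-1}$ containing $z$. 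In either case $\N_G(C)\cong\D_{2(p-1)}$, its "reflection" involutions fix exactly two projective points, and the same transitivity-and-conjugation trick lets me move one fixed point onto $\d$, yielding $w\in G_\d$ with $\l z,w\r\cong\D_{2(p-1)}$. Throughout, the fact that $\l z,w\r$ is exactly $\D_{2(p+\varepsilon)}$ and not smaller follows because $\N_G(H)$ (resp.\ $\N_G(C)$) is already dihedral of that order and contains both $z$ and $w$, while maximality of $\D_{2(p\pm1)}$ in $G$ (Lemma~\ref{PSL-subgps}) prevents $\l z,w\r$ from being larger.

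The main obstacle I anticipate is the bookkeeping needed to guarantee that the chosen reflection $w$ \emph{does} fix the prescribed point $\d$ while simultaneously having $\l z,w\r$ land in the correct dihedral group of order $2(p+\varepsilon)$ — in other words, coordinating the two conjugacy constraints (fix $\d$; invert a cyclic group whose unique involution is the fixed $z$). The clean way around this is to reverse the order of choices: start from $z$ fixed, enumerate the cyclic subgroups $H'\cong\ZZ_{p+\varepsilon}$ having $z$ as their unique involution — all conjugate under $\C_G(z)$ — then for each, $\N_G(H')$ contributes $\frac{p+\varepsilon}{2}$ reflection involutions, each fixing a pair of points; a counting argument (each of the $p+1$ points of $\calP$ should arise as a fixed point of one of these reflections by the transitivity of $\C_G(z)$ on the relevant cyclic subgroups, or directly by Sylow/order considerations on $G_\d$) shows every $\d\in\calP$ is hit. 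I would also double-check the edge case $\varepsilon=-1$, $p=5$, where $p-1=4$ and $\D_{2(p-1)}=\D_8$ is a Sylow $2$-subgroup, to make sure the "unique involution of $H$" framing is not vacuous there.
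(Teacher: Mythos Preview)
Your approach has a genuine error in the $\varepsilon=1$ case. You take $w_0$ to be a reflection in $\N_G(H)\cong\D_{2(p+1)}$ and claim $\l z,w_0\r=\N_G(H)\cong\D_{2(p+1)}$. But $z$ is the unique involution of the cyclic subgroup $H$, hence $z$ is \emph{central} in $\N_G(H)$; consequently $z$ commutes with every reflection $w_0$, and $\l z,w_0\r\cong\ZZ_2\times\ZZ_2$, not $\D_{2(p+1)}$. The same mistake recurs in your $\varepsilon=-1$ paragraph and in your fallback counting argument: whenever you place $z$ inside the cyclic part of a dihedral group $\D_{2(p+\varepsilon)}$ and then pair it with a reflection of that same dihedral group, the pair generates only a Klein four-group. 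To get $\l z,w\r\cong\D_{2(p+\varepsilon)}$ you need $|zw|=p+\varepsilon$, i.e.\ $z$ must itself be a \emph{reflection} (non-central involution) of the dihedral group in question, not its centre.

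The paper's argument avoids this trap by not trying to locate $w$ inside $\N_G(H)$ at all. It first simply asserts the existence of \emph{some} involution $w$ with $\l z,w\r\cong\D_{2(p+\varepsilon)}$ (both conjugacy classes of involutions in $G$ occur among the reflections of a maximal $\D_{2(p+\varepsilon)}$, so some conjugate of such a group has $z$ as a reflection). It then uses Lemma~\ref{lem:order=p-1} and the $p\bmod4$ split in Lemma~\ref{lem:PGL-action} to deduce that $w$ lies in the conjugacy class of involutions that fix exactly two points of~$\calP$. Finally --- and this is the step that replaces your conjugation manoeuvre --- it observes that $H$ centralises $z$ and acts transitively on $\calP$, so conjugating $w$ by elements of $H$ produces, for every $\d\in\calP$, an involution in $G_\d$ still satisfying $\l z,w^h\r\cong\D_{2(p+\varepsilon)}$. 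Your instinct to conjugate by something centralising $z$ was right; the missing ingredient is starting from a $w$ with $|zw|=p+\varepsilon$ rather than from $\N_G(H)$.
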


\begin{proof}
	By Lemma~\ref{lem:PGL-action}, there exists an involution $w\in G$ such that $\l z,w\r\cong\D_{2(p+\varepsilon)}$.
	We claim that $w$ fixes a point of $\calP$.
	
	First, assume we are in the case where $p\equiv1$ $(\mod 4)$.
	Then $H\cap L\cong \ZZ_{p+1\over2}$ is of odd order.
	Hence $z\in G{\setminus} L$, and so $w\in L$ by Lemma~\ref{lem:order=p-1}.
	Thus $w$ fixes some point $\d\in\calP$ by Lemma~\ref{lem:PGL-action}.
	
	On the other hand, assume that $p\equiv3$ $(\mod 4)$.
	Then $H\cap L\cong\ZZ_{p+1\over2}$ is of even order.
	Hence $z\in L$, and so $w\in G{\setminus} L$ by Lemma~\ref{lem:order=p-1}.
	Thus $w$ fixes a point $\d\in\calP$ by Lemma~\ref{lem:PGL-action}.
	
	In either case, $w$ fixes a point of $\calP$, as claimed.
	We notice that the cyclic group $H\cong\ZZ_{p+1}$ centralizes $z$ and acts transitively on $\calP$.
	It follows that, for each point $\d_i\in\calP$, there exists an involution $x_i\in L_{\d_i}$ such that $\l z,x_i\r\cong\D_{2(p+\varepsilon)}$, as stated.
\end{proof}

\begin{construction}\label{cons-1}
{\rm
	Let $G=\PGL(2,p)$, where $p\geqslant 5$ is a prime.
\begin{enumerate}[(1)]
	\item Let $H<G$ be a cyclic group of order $p+1$, and let $z$ be the involution of $H$.\vskip0.1in
	\item For a point $\d_k\in\calP$, pick involutions $x_k,y_k\in L_{\d_k}$ such that $\l z,x_k\r\cong\D_{2(p+1)}$, and $\l z,y_k\r\cong\D_{2(p-1)}$.
We remark that the existence for such involutions $x_k$ and $y_k$ is due to Lemma~\ref{lem:key-cons}.
\end{enumerate}
}
\end{construction}

\begin{lemma}\label{lem:cons-PGL-1}
Let $\PSL(2,p)=L<G=\PGL(2,p)$, where $p\geqslant5$ is a prime.
Then for each integer $k\in\{0,1,\dots,p\}$, a triple $(x_k,y_k,z)$ defined in {\rm Construction~\ref{cons-1}} is a reversing triple for $G$ such that
\[\mbox{$\l z,x_k\r\cong\D_{2(p+1)}$, $\l z,y_k\r\cong\D_{2(p-1)}$, and $\l x_k,y_k\r\cong\D_{2p}$.}\]
Further, $G=L{:}\l z\r$ for $p\equiv1$ $(\mod 4)$, and $G=L{:}\l x_k\r=L{:}\l y_k\r$ for $p\equiv3$ $(\mod 4)$.

Moreover, each reversing triple for $G$ can be obtained by Construction~$\ref{cons-1}$.
\end{lemma}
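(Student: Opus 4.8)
The statement splits into a forward part --- that the triples built in Construction~\ref{cons-1} are reversing triples with the stated dihedral data and coset behaviour --- and a converse part --- that every reversing triple of $G=\PGL(2,p)$ arises this way --- and I would handle them in turn. \emph{Forward direction.} Fix a triple $(x_k,y_k,z)$ from Construction~\ref{cons-1}: thus $z$ is the involution of a cyclic subgroup $H\cong\ZZ_{p+1}$, and $x_k,y_k$ are involutions of the point stabiliser $G_{\d_k}$ with $\l z,x_k\r\cong\D_{2(p+1)}$ and $\l z,y_k\r\cong\D_{2(p-1)}$ (their existence being exactly Lemma~\ref{lem:key-cons}). Since $\D_{2(p+1)}\not\cong\D_{2(p-1)}$ for $p\ge5$ we have $x_k\ne y_k$. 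Identifying $G_{\d_k}$ with $\AGL(1,p)=\{t\mapsto at+b\}$ acting on $\calP{\setminus}\{\d_k\}$, its involutions are precisely the maps $t\mapsto -t+b$, and the product of two distinct such maps is a nontrivial translation, of order $p$; hence $\l x_k,y_k\r\cong\D_{2p}$, and $z\notin\l x_k,y_k\r$ (otherwise $\l z,x_k\r$, of order $2(p+1)$, would sit inside a group of order $2p$). Now $\l x_k,y_k,z\r$ contains $\l z,x_k\r\cong\D_{2(p+1)}$, a maximal subgroup of $G$ by Lemma~\ref{PSL-subgps}, together with $\l x_k,y_k\r\cong\D_{2p}$, which is not contained in $\l z,x_k\r$ for order reasons; hence $\l x_k,y_k,z\r=G$. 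The arc-stabiliser $\l x_k,y_k\r\cap\l z\r$ is trivial, so $G$ is regular on the arc set of $\RevMap(G,x_k,y_k,z)$, and $(x_k,y_k,z)$ is a genuine reversing triple with $\l x_k,y_k\r\cong\D_{2p}$, $\l z,x_k\r\cong\D_{2(p+1)}$, $\l z,y_k\r\cong\D_{2(p-1)}$.

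\emph{Coset statements.} By Lemma~\ref{lem:PGL-action} the cyclic group $H$ is regular on $\calP$, so its involution $z$ fixes no point; and $H\cap L$, being of index at most $2$ in $H$ while $L$ has no element of order $p+1$ (Lemma~\ref{PSL-subgps}), is cyclic of order $(p+1)/2$. If $p\equiv1\pmod4$ then $(p+1)/2$ is odd, so $z\notin L$ and $G=L{:}\l z\r$. If $p\equiv3\pmod4$ then $(p+1)/2$ is even, so $z\in L$; since all involutions of $L$ are conjugate to $z$ (Lemma~\ref{lem:PGL-action}), no involution of $L$ fixes a point, so $x_k,y_k$, which fix $\d_k$, lie in $G{\setminus}L$, whence $G=L{:}\l x_k\r=L{:}\l y_k\r$.

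\emph{Converse.} Let $(x,y,z)$ be a reversing triple for $G$. By Lemma~\ref{lem:subgps}(ii), $\{\l x,y\r,\l y,z\r,\l z,x\r\}=\{\D_{2p},\D_{2(p+1)},\D_{2(p-1)}\}$; relabelling the three involutions we may assume $\l x,y\r\cong\D_{2p}$, $\l z,x\r\cong\D_{2(p+1)}$ and $\l z,y\r\cong\D_{2(p-1)}$. Lemma~\ref{lem:order=p-1}(2) applied to $\l x,y\r$ gives $x,y\in L$ precisely when $p\equiv1\pmod4$; feeding this into Lemma~\ref{lem:order=p-1}(1) applied to $\l z,x\r$ (when $p\equiv1$) or to $\l z,y\r$ (when $p\equiv3$) shows $z\in L$ precisely when $p\equiv3\pmod4$. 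By the coset analysis above, the involution of any cyclic subgroup of $G$ of order $p+1$ lies in $L$ under exactly the same condition, and since all involutions of $L$, and all involutions of $G{\setminus}L$, are $G$-conjugate (Lemma~\ref{lem:PGL-action}), it follows that $z$ is the involution of some cyclic subgroup $H\cong\ZZ_{p+1}$ of $G$. Finally $\l x,y\r\cong\D_{2p}$ normalises its normal Sylow $p$-subgroup $P$, and $N_G(P)=G_\d$ for the unique point $\d\in\calP$ fixed by $P$, so $x,y\in G_\d$; thus $(x,y,z)$ is exactly a triple of the form produced by Construction~\ref{cons-1}, with cyclic group $H$ and base point $\d$.

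\emph{Main obstacle.} The $\AGL(1,p)$-computation and the maximality appeal are routine; the real work is in the converse, where one must recover the $G$-conjugacy class of $z$ from the isomorphism types of the dihedral subgroups it generates. This forces a split into the cases $p\equiv\pm1\pmod4$, with attention to which of $\D_{2(p+1)},\D_{2(p-1)}$ plays the role of $\D_{2(p+\varepsilon)}$ in Lemma~\ref{lem:order=p-1}, and one must reconcile the fact that $z$ appears as a reflection inside $\l z,x\r\cong\D_{2(p+1)}$ with the requirement that it be the (unique, rotation-type) involution of a genuinely different cyclic subgroup of order $p+1$.
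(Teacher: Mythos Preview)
Your proof is correct and follows essentially the same approach as the paper: forward via Lemma~\ref{lem:key-cons} plus the dihedral structure of the point stabiliser, converse via Lemma~\ref{lem:subgps} and the coset analysis of Lemma~\ref{lem:order=p-1}. You are in fact more careful than the paper at several points---you explicitly check $x_k\ne y_k$, you give a clean generation argument via the maximality of $\D_{2(p+1)}$, and in the converse you actually derive (rather than merely ``assume'') the $L$-coset of $z$ and then verify that $z$ is the involution of some $\ZZ_{p+1}$, a step the paper glosses over.
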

\begin{proof}
By Lemma~\ref{lem:key-cons}, for a given involution $z\in H$ and $k\in\{0,1,\dots,p\}$, there are involutions $x_k,y_k\in G_{\d_k}$ such that
$\l z,x_k\r\cong\D_{2(p+1)}$, and $\l z,y_k\r\cong\D_{2(p-1)}$.
Now the involutions $x_k,y_k$ generate a dihedral subgroup of $L_{\d_k}\cong\ZZ_p{:}\ZZ_{p-1\over2}$, and so $\l x_k,y_k\r\cong\D_{2p}$.
Thus $(x_k,y_k,z)$ is indeed a reversing triple satisfying the lemma.
	
Conversely, let $(x,y,z)$ be a reversing triple for $G$.
Then by Lemma~\ref{lem:subgps}, we may assume that $\l x,y\r=\D_{2p}$, $\l x,z\r=\D_{2(p+1)}$, and $\l y,z\r=\D_{2(p-1)}$.
By Lemma~\ref{lem:PGL-action}, we may further assume that $z\in L$ if $p\equiv3$ $(\mod 4)$, and $z\in G{\setminus} L$ if $p\equiv 1$ $(\mod 4)$.
Then $x$ fixes some point $\d_i\in\calP$ with $0\leq i\leq p$.
Similarly, $y$ fixes some point $\d_j$ with $0\leq j\leq p$.
Since $\l x,y\r\cong\D_{2p}$, it follows that $x,y$ fixes the same point $\d_k\in\calP$.
Thus $(x,y,z)=(x_k,y_k,z)$, as defined in Construction~$\ref{cons-1}$.
\end{proof}

Finally, we determine arc-regular triples for groups $(\ZZ_m\times\PSL(2,p)){:}\ZZ_2$, where $p\geqslant 5$ is a prime and $p\equiv3$ $(\mod4)$.

\begin{construction}\label{PGL-cons-2}
{\rm
For a given prime $p\equiv3\pmod 4$, let $\PSL(2,p)=L<G=\PGL(2,p)$.
Let $(x_k,y_k,z)$ be a reversing triple for $G$ defined in Construction~$\ref{cons-1}$.
Define
\[X=\l c\r{:}G=(\l c\r\times L){:}\l y_k\r,\]
 where $\gcd(|c|,|G|)=1$ and $c^{y_k}=c^{x_k}=c^{-1}$, and define
 \[(x,y,z)=(c_1x_k,c_2y_k,z),\]
 where  $\l c_1c_2^{-1}\r=\l c\r$.
%

}

\end{construction}

\begin{lemma}\label{lem:PGL-3}
The triple $(x,y,z)=(c_1x_k,c_2y_k,z)$ defined in {\rm Construction~\ref{PGL-cons-2}} is a reversing triple for $X=(\l c\r\times L){:}\l y_k\r\cong (\ZZ_m\times\PSL(2,p)){:}\ZZ_2$, and
\[\mbox{$\l c_1x_k,c_2y_k\r\cong\D_{2mp}$, $\l c_2y_k,z\r\cong\D_{2(p-1)}$, and $\l c_1x_k,z\r\cong\D_{2(p+1)}$.}\]
Moreover, each reversing triple for $X$ can be obtained by Construction~\ref{PGL-cons-2}. 
\end{lemma}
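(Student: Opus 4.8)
The plan is to verify the three claimed facts in sequence: first that $(x,y,z)=(c_1x_k,c_2y_k,z)$ really is a triple of generating involutions of $X$, second that the three products generate the stated dihedral groups, and finally the converse that every reversing triple of $X$ arises this way. For the first part, I would begin by checking that $c_1x_k$ and $c_2y_k$ are involutions: since $c^{x_k}=c^{y_k}=c^{-1}$ and $x_k,y_k$ are involutions, $(c_ix_k)^2=c_i\,c_i^{x_k}=c_ic_i^{-1}=1$ provided each $c_i$ is a power of $c$, which is built into the requirement $\l c_1c_2^{-1}\r=\l c\r$ once we note $c_1,c_2\in\l c\r$. That $z$ is an involution is inherited from Construction~\ref{cons-1}. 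For the generation claim $X=\l x,y,z\r$: the subgroup $\l c_1x_k,c_2y_k,z\r$ projects onto $\l x_k,y_k,z\r=G=\PGL(2,p)$ (by Lemma~\ref{lem:cons-PGL-1}, since $(x_k,y_k,z)$ is a reversing triple for $G$), so it suffices to show the subgroup meets $\l c\r$ nontrivially in all of $\l c\r$; and indeed $(c_1x_k)(c_2y_k)^{-1}$ lies in the subgroup and, working modulo $L$ where $x_k,y_k$ have the same image, equals $c_1c_2^{-1}$ times an element of $L$, so after squaring or a suitable power one extracts the generator $c_1c_2^{-1}$ of $\l c\r$. Hence $\l x,y,z\r\supseteq\l c\r\times$ (the preimage structure) $=X$.

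Next I would compute the three dihedral subgroups. Since $\l c\r$ is central in $\l c\r\times L$ and is inverted by $x_k$ and $y_k$, the element $c_1x_k\cdot c_2y_k=c_1 c_2^{x_k} x_ky_k$, and as $x_ky_k$ has order $p$ inside $\l x_k,y_k\r\cong\D_{2p}\le L_{\d_k}$, while $c_1c_2^{x_k}=c_1c_2^{-1}$ generates $\l c\r\cong\ZZ_m$ with $\gcd(m,p)=1$, this product has order $mp$; together with the involution $c_1x_k$ inverting it (again using $c^{x_k}=c^{-1}$ and $(x_ky_k)^{x_k}=(x_ky_k)^{-1}$), we get $\l c_1x_k,c_2y_k\r\cong\D_{2mp}$. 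For $\l c_1x_k,z\r$: the element $z$ centralizes $c$ (this is exactly the place where $p\equiv3\pmod4$ matters — by Lemma~\ref{lem:order=p-1}, $z\in L$ while $x_k,y_k\notin\l c\r\times L$, so $z$ lies in the $L$-factor and commutes with $c$), hence $c_1x_k\cdot z$ has the same order as $x_kz$ modulo a central $\l c\r$-part; but in fact $\l z,x_k\r\cong\D_{2(p+1)}$ already contains $c_1x_k$ only after multiplying by $c_1$, so the cleanest route is: $(c_1x_k)z$ has order equal to $\mathrm{lcm}$ of the order of $x_kz$ in $G$ (namely $p+1$) and the order of $c_1$ dividing $m$ — and since $z$ centralizes $c_1$ whereas $x_k$ inverts it, a short computation shows $c_1$ cancels in the product $(c_1x_k z)^2$, giving order exactly $p+1$ and $\l c_1x_k,z\r\cong\D_{2(p+1)}$; symmetrically $\l c_2y_k,z\r\cong\D_{2(p-1)}$.

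For the converse, let $(x',y',z')$ be an arbitrary reversing triple for $X$. By Lemma~\ref{lem:subgps}(iii) the set $\{\l x',y'\r,\l y',z'\r,\l z',x'\r\}=\{\D_{2mp},\D_{2(p+1)},\D_{2(p-1)}\}$, and by the analysis in the proof of that lemma the involution playing the role of $z$ lies in the $L$-factor (centralizing $c$) while the other two lie outside $\l c\r\times L$; relabelling, assume $z'$ plays this role with $\l x',z'\r\cong\D_{2(p+1)}$ and $\l y',z'\r\cong\D_{2(p-1)}$. Passing to $\ov X=X/\l c\r\cong\PGL(2,p)$, the images $(\ov{x'},\ov{y'},\ov{z'})$ form a reversing triple for $\PGL(2,p)$, so by Lemma~\ref{lem:cons-PGL-1} after conjugating in $X$ we may take $\ov{x'}=\ov{x_k}$, $\ov{y'}=\ov{y_k}$, $\ov{z'}=\ov z$; conjugacy in $\ov X$ lifts to conjugacy in $X$ since $\gcd(|c|,|\PGL(2,p)|)=1$ (a standard coprime-conjugacy argument, or Hall's theorem). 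Then $x'=c_1x_k$, $y'=c_2y_k$, $z'=c z$ for some $c_1,c_2\in\l c\r$ and $c\in\l c\r$; but $z'$ centralizes $c$ forces, together with $(z')^2=1$, that the $\l c\r$-part of $z'$ squares to $1$ and hence (as $|c|=m$ is odd, being coprime to $|\PSL(2,p)|$ which is even) $z'=z$. Finally the condition $\l x',y'\r\cong\D_{2mp}$ forces the product $x'y'$ to have order $mp$, which as computed above happens exactly when $c_1c_2^{-1}$ generates $\l c\r$; so $(x',y',z')=(c_1x_k,c_2y_k,z)$ is of the form in Construction~\ref{PGL-cons-2}.

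The main obstacle I expect is bookkeeping the action of $z$, $x_k$, $y_k$ on the central cyclic factor $\l c\r$ and pushing orders of products through the extension cleanly — in particular making rigorous that $z$ centralizes $c$ (which rests on the mod-$4$ hypothesis via Lemma~\ref{lem:order=p-1}) and that the two "outer" involutions invert $c$, since all the dihedral-type conclusions and the coprime-lifting of conjugacy in the converse hinge on this. The group-theoretic skeleton is otherwise routine, but one must be careful that $\gcd(m,|\PSL(2,p)|)=1$ with $m$ odd is used both to lift conjugacy and to rule out $z'$ acquiring a nontrivial $\l c\r$-component.
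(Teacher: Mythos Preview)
Your proposal is correct and follows essentially the same route as the paper. In the forward direction your computations match the paper's (the paper phrases the step $|c_1x_kz|=p+1$ via the quotient $X/L$, but your cancellation $(c_1x_kz)^2=(x_kz)^2$ is the same calculation; your intermediate remark that the order is an $\mathrm{lcm}$ of $p+1$ and $|c_1|$ is wrong and should simply be replaced by the cancellation you give immediately after). For the converse the paper is slightly more direct than you: since $X=\langle c\rangle{:}G$ is an honest semidirect product with complement $G$, every element of $X$ is uniquely $c'g$ with $c'\in\langle c\rangle$ and $g\in G$, so one writes $x'=c_1^{-1}x_k'$, $y'=c_2^{-1}y_k'$ with $x_k',y_k'\in G$, checks $z'\in L\leq G$, and verifies $(x_k',y_k',z')$ is a reversing triple for $G$ so that Lemma~\ref{lem:cons-PGL-1} applies. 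Your detour through the quotient $X/\langle c\rangle$ together with a ``conjugacy lifts by Hall's theorem'' step is unnecessary (and Hall's theorem is not quite the tool you want): the complement $G$ already provides canonical lifts of $\overline{x'},\overline{y'},\overline{z'}$, which is all that is required.
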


\begin{proof}
Since $p\equiv 3\pmod 4$, we have that $x_k,y_k\in G_{\d_k}{{\setminus}}L$, and so $x_ky_k$ is an element of $L$ of order $p$ by Lemma~$\ref{lem:cons-PGL-1}$.
Hence the element $x_ky_k\in L$ centralizes $\l c\r$, and so
\[ |c_1x_k\cdot c_2y_k|=|c_1c_2^{-1}\cdot x_ky_k|=mp,\]
as $|c_1c_2^{-1}|=|c|$.
Thus we have a dihedral subgroup 
\[\l c_1x_k,c_2y_k\r\cong\D_{2mp}.\]
Furthermore, by Lemma~\ref{lem:cons-PGL-1}, the element $x_kz$ is of order $p+1$, and the element $y_kz$ is of order $p-1$.
By Lemma~\ref{lem:order=p-1}~(1), the  involution $z\in L$.
So the images of $c_1x_kz$ and $c_2x_kz$ under $X/L$ are involutions $\ov {c_1x_k}$ and $\ov {c_2y_k}$.
Then $|c_1x_kz|=|x_kz|$ and $|c_2y_kz|=|y_kz|$, and so 
\[\l c_1x_k,z\r\cong\D_{2(p+1)},\text{ and } \l c_2y_k,z\r\cong\D_{2(p-1)}.\]
Since $\l x_k,y_k,z\r=G$ and $m$ divides the order $|\l c_1x_k,c_2y_k,z\r|$, we conclude that $X=\l c_1x_k,c_2y_k,z\r$, and $(c_1x_k,c_2y_k,z)$ is a reversing triple for the group $X$.

Conversely, let $(x,y,z)$ be a reversing triple for groups $X$ defined in Construction~\ref{PGL-cons-2}.
By Lemma~\ref{lem:subgps}~(iii), we may assume that $\l x,y\r\cong\D_{2mp}$.
Then $xy$ is an element of $\l c\r\times L$ of order $mp$, and $x,y$ are involutions in $X{{\setminus}}\l c\r\times L$.
It follows that $x$, $y$ are both involutions in $X{{\setminus}} \l c\r\times L$, as $p\equiv 3\pmod 4$.
Hence there exist $c_1$, $c_2\in \l c\r$ such that $c_1x$, $c_2y\in X{{\setminus}} L$.
Write $x_k=c_1x$ and $y_k=c_2y$.
Then $ x=c_1^{-1}x_k\text{ and }y=c_2^{-1}y_k$, and 
\[mp=|xy|=|c_1^{-1}x_k\cdot c_2^{-1}y_k|=|c_1^{-1}c_2|\cdot |x_ky_k|.\]
Thus $ |c_1^{-1}c_2|=m$ and $|x_ky_k|=p$.
Therefore both $x_k,y_k\in G_{\d_k}\cong\ZZ_p{:}\ZZ_{p-1}$, for some $\d_k\in \calP$, and so
\[ \l x_k,y_k\r=\l c_1x,c_2y\r\cong \D_{2p}.\]

By Lemma~\ref{lem:subgps}~(iii), we have that $\l x,z\r\cong\D_{2(p+1)}$ and $\l y,z\r\cong \D_{2(p-1)}$, which implies that
\[|c_1^{-1}x_kz|=|xz|=p+1\text{ and } |c_2^{-1}y_kz|=|yz|=p-1.\]
Since $\gcd(m,|G|)=1$, the images of $c_1^{-1}x_kz$ and $c_2^{-1}y_kz$ under $X/\l c\r$ are of order $p+1$ and $p-1$, respectively.
It follows that $x_kz$ and $y_kz$ are of order $p+1$ and $p-1$, respectively.
By the statement~(1) of Lemma~\ref{lem:order=p-1}, the involution $z\in L$ as $p\equiv 3\pmod 4$.
Thus by Lemma~\ref{lem:cons-PGL-1}, $(x_k,y_k,z)$ is a reversing triple for $X/\l c\r\cong G=\PGL(2,p)$.
So the reversing triple $(x,y,z)=(c_1^{-1}x_k,c_2^{-1}y_k,z)$ is as given in Construction~\ref{PGL-cons-2} by replacing $(c_1^{-1},c_2^{-1})$ by $(c_1,c_2)$.
\end{proof}

\subsection{Completing the proof of Theorem~\ref{mainThm}}\

Before summarizing the arguments for the proof of Theorem~\ref{mainThm}, we recall a relation between the genus $g$ and the Euler characteristic of a surface $\mathcal{S}$ given by Euler formula:
\[\chi(\mathcal{S})= \left\{\begin{aligned} &2-2g,\text{\ if\ } \mathcal{S}\text{\ is orientable;}\\
	&2-g,\text{\ if\ } \mathcal{S}\text{\ is nonorientable.}\end{aligned} \right.\]
This particularly tells that a map $\calM$ is non-orientable if the Euler characteristic $\chi(\calM)$ is odd.

\vskip0.1in
\noindent{\bf Proof of Theorem~\ref{mainThm}:}
Let $\calM=(V, E, F)$ be a $G$-arc-transitive map, where $G\leqslant\Aut(\calM)$.
Assume that $\gcd(\chi(\calM),|E|)=1$, and that $G$ is a non-solvable group.
Then $\calM$ is not a rotary map by Lemma~\ref{lem:not-rotary}.
Further, by Lemma~\ref{AutM-structure}, we have that
\[\mbox{$G\cong\PSL(2,p)$ or $(\ZZ_m\times\PSL(2,p){:}\ZZ_2$.}\]

If $G$ is transitive on $F$, then by Proposition~\ref{prop:face-trans}, the map $\calM$ is a flag-regular map with $G=\Aut(\calM)\cong\A_5$, with underlying graph being the complete graph $\K_6$ or the Peterson graph, which are dual to each other.
This is as given in part~(1) of Theorem~\ref{mainThm}.

Assume that $G$ is not transitive on the face set $F$.
Then $\calM$ is a reversing map by the definition, and by Proposition~\ref{prop:face-trans}, we have that $G=\Aut(\calM)$, which is isomorphic to $\PSL(2,p)$ or $(\ZZ_m\times\PSL(2,p){:}\ZZ_2$.

First, if $G\cong\PSL(2,p)$, then $p\equiv1\pmod 4$, and $\{G_\a,G_f,G_{f'}\}=\{\D_{2p},\D_{p+1},\D_{p-1}\}$ by Lemma~\ref{lem:subgps}\,(1).
Reversing triples for $G$ are as defined in Lemma~\ref{lem:PSL-triple}.

Next, for $G\cong\PGL(2,p)$, we have that $\{G_\a,G_f,G_{f'}\}=\{\D_{2p},\D_{2(p+1)},\D_{2(p-1)}\}$ by Lemma~\ref{lem:subgps}\,(2).
In this case, by Lemma~\ref{lem:cons-PGL-1}, reversing triples for $G$ are as defined in Construction~\ref{cons-1}.

Now, for the case where $G\cong\ZZ_m{:}\PGL(2,p)$ with $m\not=1$, we have that the prime $p\equiv 3\pmod 4$, and $\{G_\a,G_f,G_{f'}\}=\{\D_{2mp},\D_{2(p+1)},\D_{2(p-1)}\}$ by Lemma~\ref{lem:subgps}\,(3).
By Lemma~\ref{lem:PGL-3}, reversing triples for $G$ in this case are as defined in Construction~\ref{PGL-cons-2}.

Finally, if $\calM$ is flag-regular, then $\calM$ is on a projective plane, so it is non-orientable.
Assume that $\calM$ is reversing, so that $G=\Aut(\calM)$ is regular on the arc set of $\calM$.
Then $|G|=2|E|$, and $|E|$ is even as $|G|$ is divisible by 4.
Since $\gcd(\chi(\calM),|E|)=1$ by our assumption, $\chi(\calM)$ is odd, and so $\calM$ is non-orientable.

This completes the proof of  Theorem~\ref{mainThm}.
\qed

	\vskip0.1in


\begin{thebibliography}{}
		
	
  \bibitem{Brahana}
		 H. R. Brahana, Regular maps and their groups.  {\it Amer. J. Math.}, {\bf 49} (1927), 268–-284.
  \bibitem{low-dim}
		J. N. Bray, D. F. Holt, C. M. Roney-Dougal, The maximal subgroups of the low-dimensional finite classical groups. {\it London Mathematical Society Lecture Note Series}, {\bf 407} (2013), xiv+438.
   \bibitem{regular-3p}
		 M. Conder, R. Nedela, J.\v{S}ir\'{a}\v{n}, Classification of regular maps of {E}uler characteristic
		 {$-3p$}. {\it J. Combin. Theory Ser. B}, {\bf 102} (2012), 967--981.
   \bibitem{birotary-p}
		A. B. d'Azevedo, D.A. Catalano, J. \v{S}ir\'{a}\v{n},  Bi-rotary maps of negative prime characteristic. {\it Ann. Comb}. {\bf 23} (2019), 27–-50.
    \bibitem{regular-p}
		 A. B. d'Azevedo, R. Nedela, J. \v{S}ir\'{a}\v{n}, Classification of regular maps of negative prime Euler characteristic. {\it Trans. Amer. Math. Soc.}, {\bf 357} (2004), 4175--4190.
  \bibitem{cfds2}
		D. Gorenstein, J. H. Walter, The characterization of finite groups with dihedral Sylow 2-subgroups. I. {\it J. Algebra}, {\bf 2} (1965), 218--270.
  \bibitem{etm14tp}
		J. E. Graver, M. E. Watkins, Locally finite, planar, edge-transitive graphs. {\it Mem. Amer. Math. Soc}, {\bf 126} (1997), vi+75.

\bibitem{auto-etm}
G. A. Jones, Automorphism groups of edge-transitive maps. {\it Acta Math. Univ. Comenian. (N.S.)}, {\bf 88} (2019), 841--847

\bibitem{solvaASC}
A. W. Joseph, Spaces of constant curvature. {\it AMS Chelsea}, (2011), xviii+424.

\bibitem{LiuLY-1}
C. H. Li, L. Y. Liu, Arc-transitive maps with coprime Euler characteristic and edge number\,--\,I, 	arXiv:2412.18758 2024.

\bibitem{RotaMap}
C. H. Li, C. E. Praeger, S. J. Song, Locally finite vertex-rotary maps and coset graphs with finite valency and finite edge multiplicity. {\it J. Combin. Theory Ser. B}, {\bf 169} (2024), 1--44.

\bibitem{RevMap}
C. H. Li, C. E. Praeger, S. J. Song, A new characterisation of the five types of arc-transitive maps, preprint 2024.

\bibitem{howsymmetry}
J. \v{S}ir\'{a}\v{n}, How symmetric can maps on surfaces be? {\it London Math. Soc. Lecture Note Ser}, {\bf 409} (2013), 161--238

\bibitem{nonsolvaASC1}
M. Suzuki, On finite groups with cyclic {S}ylow subgroups for all odd
		primes. {\it Amer. J. Math.}, {\bf 77} (1955), 657--691.


		
	
		
		
		

		\bibliographystyle{100}
	\end{thebibliography}
\end{document}